\documentclass[a4paper, 12pt]{amsart}

\usepackage[a4paper]{geometry}
\geometry{vmargin=3cm,hmargin=3.3cm} 
\geometry{left=2cm,right=2cm,top=2.5cm,bottom=2.5cm} 

\usepackage{amssymb}
\usepackage{amsthm}
\usepackage{amsmath}
\usepackage{graphicx}
\usepackage{hyperref}
\usepackage{multirow}
\usepackage{array}
\usepackage[colorinlistoftodos]{todonotes}
\usepackage[english]{babel}
\usepackage{lmodern}
\usepackage{comment}
\usepackage{bbold}
\usepackage{mathtools}
\usepackage{enumerate}
\usepackage{cite}

\newcommand{\genlegendre}[4]{%
	\genfrac{(}{)}{}{#1}{#3}{#4}%
	\if\relax\detokenize{#2}\relax\else_{\!#2}\fi
}
\newcommand{\legendre}[3][]{\genlegendre{}{#1}{#2}{#3}}

\allowdisplaybreaks

\usepackage[utf8]{inputenc}
\usepackage[T1]{fontenc}

\newtheorem{theorem}{Theorem}[section]
\newtheorem{lemma}[theorem]{Lemma}
\newtheorem{corollary}[theorem]{Corollary}
\newtheorem*{conjecture}{Conjecture}

\theoremstyle{definition}
\newtheorem{definition}[theorem]{Definition}
\newtheorem{proposition}[theorem]{Proposition}

\newtheorem*{question}{Question}
\newtheorem{example}[theorem]{Example}

\theoremstyle{remark}
\newtheorem{remark}[theorem]{Remark}
\newcommand{\longcomment}[1]{}

\DeclareMathOperator{\Frob}{Frob}

\DeclareMathOperator{\rank}{rank}

\DeclareMathOperator{\Gal}{Gal}

\DeclareMathOperator{\OO}{\mathcal{O}}
\DeclareMathOperator{\disc}{\operatorname{disc}}
\DeclarePairedDelimiter\abs{\lvert}{\rvert}

\newcommand{\Mod}[1]{\ (\mathrm{mod}\ #1)}
\newcommand{\bigO}[1]{O\left(#1\right)}
\newcommand{\smallO}[1]{o\left(#1\right)}


\newcommand{\whichbold}[1]{\mathbb{#1}} 

\newcommand{\Z}{\whichbold{Z}}
\newcommand{\FF}{\whichbold{F}}
\newcommand{\R}{\whichbold{R}}

\newcommand{\Q}{\whichbold{Q}}

\newcommand{\Fp}{\whichbold{F}_{p}}
\newcommand{\C}{\whichbold{C}}
\newcommand{\Qp}{\whichbold{Q}_{p}}
\newcommand{\N}{\whichbold{N}}


\keywords{genus one quartics, local solubility, Dedekind zeta functions, Dirichlet series, Landau-Selberg-Delange method}

\numberwithin{equation}{section}
\author{Lukas Novak}
\address{Department of Mathematics\\ 
	University of Zagreb\\
	 Bijeni\v{c}ka cesta 30\\
	  10000 Zagreb\\
	  Croatia}
\email{lukas.novak@math.hr}

\title[Quadratic twists of genus one curves]{Quadratic twists of genus one curves}

\makeatletter
\setcounter{tocdepth}{3}

\renewcommand{\tocsection}[3]{%
	\indentlabel{\@ifnotempty{#2}{\bfseries\ignorespaces#1 #2\quad}}\bfseries#3}
\renewcommand{\tocsubsection}[3]{%
	\indentlabel{\@ifnotempty{#2}{\ignorespaces#1 #2\quad}}#3}

\newcommand\@dotsep{4.5}
\def\@tocline#1#2#3#4#5#6#7{\relax
	\ifnum #1>\c@tocdepth 
	\else
	\par \addpenalty\@secpenalty\addvspace{#2}%
	\begingroup \hyphenpenalty\@M
	\@ifempty{#4}{%
		\@tempdima\csname r@tocindent\number#1\endcsname\relax
	}{%
		\@tempdima#4\relax
	}%
	\parindent\z@ \leftskip#3\relax \advance\leftskip\@tempdima\relax
	\rightskip\@pnumwidth plus1em \parfillskip-\@pnumwidth
	#5\leavevmode\hskip-\@tempdima{#6}\nobreak
	\leaders\hbox{$\m@th\mkern \@dotsep mu\hbox{.}\mkern \@dotsep mu$}\hfill
	\nobreak
	\hbox to\@pnumwidth{\@tocpagenum{\ifnum#1=1\bfseries\fi#7}}\par
	\nobreak
	\endgroup
	\fi}
\AtBeginDocument{%
	\expandafter\renewcommand\csname r@tocindent0\endcsname{0pt}
}
\def\l@subsection{\@tocline{2}{0pt}{2.5pc}{5pc}{}}
\makeatother

\begin{document}
\maketitle

\begin{abstract}
    For a given irreducible and monic polynomial $f(x) \in \Z[x]$ of degree $4$, we consider the quadratic twists by square-free integers $q$ of the genus one quartic ${H\, :\, y^2=f(x)}$ 
    \[
        H_q \, :\, qy^2=f(x).
    \]
    
    Let $L$ denotes the set of positive square-free integers $q$ for which $H_q$ is everywhere locally solvable. For a real number $x$, let ${L(x)= \#\{q\in L:\, q \leq x\}}$ be the number of elements in $L$ that are less then or equal to $x$.

    In this paper, we obtain that
    \[
        L(x) = c_f \frac{x}{(\ln{x})^{m}}+\bigO{\frac{x}{(\ln{x})^\alpha}}
    \]
    for some constants $c_f>0$, $m$ and $\alpha$ only depending on $f$ such that $m<\alpha \leq 1+m$.
    We also express the Dirichlet series $F(s)=\sum_{n \in L} n^{-s}$ associated to the set $L$ in terms of Dedekind zeta functions of certain number fields. 
\end{abstract}

\tableofcontents

\section{Introduction}\label{sec:1}

Currently, a large amount of research is being done on studying elliptic curves and their twists. Many new discoveries and progress have been made in this area. However, despite the large amount of progress there are still a lot of open questions. 

One such question, concerning quadratic twists of elliptic curve, is the Goldfeld's conjecture~\cite{Goldfeld1979}.

\begin{conjecture}[\textbf{Goldfeld's conjecture}]
    Let $E$ be an elliptic curve over $\Q$. Denote with $\rank{(E)}$ the rank of the elliptic curve $E$, with $E^d$ the quadratic twist of $E$ by a square-free integer $d$ and with
    \[
        S(x)=\{\text{square-free } d\in \Z:\, \abs{d}\leq x\}
    \]
    the set of all square-free integers between $-x$ and $x$, for a real number $x$. Then the following holds
    \[
        \lim_{x\to \infty}\frac{\sum_{d\in S(x)}\rank{(E^d)}}{\#S(x)}=\frac{1}{2}.
    \]
\end{conjecture}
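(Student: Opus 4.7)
The plan is to split the contribution to the average rank into a parity contribution and a non-vanishing contribution, following the standard analytic heuristic. Assuming the Birch--Swinnerton-Dyer conjecture so that $\rank(E^d) = \ord_{s=1} L(E^d, s)$, the functional equation tells us that the parity of this order equals $\tfrac{1 - w(E^d)}{2}$, where the root number $w(E^d) \in \{\pm 1\}$ is an explicit function of $d$, $E$, and the conductor. By a result of Rohrlich, as $d$ ranges over square-free integers ordered by $\abs{d}$, the values $w(E^d) = +1$ and $w(E^d) = -1$ occur each with density $\tfrac{1}{2}$, so the parity contribution alone produces an average of at least $\tfrac{1}{2}$ if we can rule out higher rank.

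The second step would be to show that for almost all $d \in S(x)$ with $w(E^d) = +1$ one has $L(E^d, 1) \neq 0$ (so analytic rank $0$), and for almost all $d$ with $w(E^d) = -1$ one has $L'(E^d, 1) \neq 0$ (so analytic rank $1$). The first assertion would be attacked via first and second moment estimates for central $L$-values in the quadratic twist family, in the spirit of Heath-Brown and Iwaniec; the mollifier method and, more recently, Smith's breakthrough on the distribution of $2$-Selmer groups gives unconditional control on the rank $0$ twists in favourable cases. The second assertion would similarly be reduced to first-derivative non-vanishing via Bump--Friedberg--Hoffstein type integral representations or the Gross--Zagier formula coupled with heights of Heegner points.

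Before engaging with the heavy analytic machinery, one could use the local solubility filter developed in this paper. If $H \,:\, y^2 = f(x)$ arises as a $2$-covering of $E$, then twists $E^d$ with $H_d$ not everywhere locally solvable cannot contribute a rational point through this covering, so the result of the present paper already pins down the density of $d$ for which the trivial upper bound on the $2$-Selmer rank of $E^d$ is achieved. Combining this with the techniques of Kane and Smith to control the joint distribution of $2^\infty$-Selmer ranks would, in favourable cases, reduce Goldfeld's conjecture to a statement about the rank $0$ and rank $1$ parts only.

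The main obstacle is the second step: unconditionally ruling out rank $\geq 2$ twists of positive density, and producing the corresponding non-vanishing of $L'(E^d, 1)$. Current technology handles specific curves (Smith's work on the congruent number curve and on $E$ with full rational $2$-torsion, Kriz--Li for certain rank $1$ families) but no general unconditional proof is known. Even conditional on BSD, the hardest piece is showing that the set of $d$ with $\rank(E^d) \geq 2$ has density zero; without subconvex bounds for $L(E^d, 1)$ of the required strength, this exceptional set can only be shown to be small on average, which is not sufficient. Consequently, the proposal outlined above recovers Goldfeld's conjecture only under BSD plus strong non-vanishing hypotheses, and an unconditional proof in full generality remains beyond the reach of present methods.
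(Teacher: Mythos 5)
This statement is labelled a \emph{conjecture} in the paper, and the paper offers no proof of it: Goldfeld's conjecture is quoted only as motivation in the introduction (together with Smith's partial result for curves with full rational $2$-torsion) and plays no further logical role. There is therefore no proof in the paper to compare against. Your write-up is not a proof either, and to your credit you say so explicitly at the end; what you have given is a survey of the standard attack (BSD plus the functional equation to reduce to root-number parity, Rohrlich for equidistribution of root numbers, non-vanishing of $L(E^d,1)$ and $L'(E^d,1)$ for almost all $d$, Smith/Kane on $2^\infty$-Selmer distributions to bound the rank $\geq 2$ contribution) together with an accurate assessment of where it breaks down, namely in ruling out a positive-density set of twists with rank $\geq 2$. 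That is a fair account of the state of the art, but the task here is simply to recognise that this is an open conjecture: no proof exists, the paper does not claim one, and your proposal should have concluded immediately after noting that the statement is conjectural rather than attempting to assemble a proof sketch. One small side remark on your third paragraph: the genus one quartics $H$ in this paper are taken with $f$ irreducible of degree $4$, so they do not in general arise as $2$-coverings of an elliptic curve (those correspond to a factorisation of $f$, e.g.\ into a linear times a cubic or into quadratics tied to rational $2$-torsion), and the local-solubility density computed here bears on the failure of the Hasse principle for these quartics rather than directly on the $2$-Selmer ranks of a fixed $E$.
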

Smith, in~\cite{smith20172inftyselmer} (see Corollary $1.3$), has proven (assuming the Birch and Swinnerton-Dyer conjecture for a certain set of elliptic curves) that the Goldfeld's conjecture is true for elliptic curves $E/\Q$ that have full rational $2$-torsion and don't have a rational cyclic subgroup of order $4$.

The above conjecture has also a very interesting consequence about the rank of quadratic twists of elliptic curves. By assuming that the Goldfeld's conjecture and the parity conjecture are true it follows that for $100\%$ of square-free integers $d$ the rank of $E^d/\Q$ is ether $0$ or $1$.  

Another interesting thing to study about quadratic twists of elliptic curve are their $2$-Selmer groups. A lot of progress is made in studying the dimension and the distribution of $2$-Selmer groups of the quadratic twists of elliptic curve and following articles only illustrate some of the results.

Barrera Salazar, Pacetti and Tornaría in~\cite{MR4477669} have obtained a lower and an upper bound for the $2$-Selmer rank of an elliptic curve $E$ over a number field $K$ without a $K$-rational point of order $2$ (under certain hypotheses on the reduction of $E$ modulo primes of $K$). As an application, they proved that under certain mild hypotheses a positive portion of the quadratic twists of $E$ have the same $2$-Selmer group.

Morgan and Paterson in~\cite{MR4400944} studied the distribution of the group $\operatorname{Sel}^2(E^d/K)$, for a fixed elliptic curve $E/\Q$ and a quadratic extension $K/\Q$, as $d$ varies over square-free integers. They have shown that, when considering square-free integers $\abs{d}<X$, the random variable
\[
    \frac{\operatorname{dim}_{\FF_2}\operatorname{Sel}^2(E^d/K)-\log{\log{\abs{d}}}}{\sqrt{2\log{\log{\abs{d}}}}}
\]
converges to the standard normal distribution as $X \to \infty$. It follows that for any fixed real number $z$, $0\%$ of quadratic twists satisfy $\operatorname{dim}_{\FF_2}\operatorname{Sel}^2(E^d/K) \leq z$. In particular, the average size of $\operatorname{Sel}^2(E^d/K)$ is infinite. Additionally, if $E$ has no rational cyclic $4$-isogeny, then the average size of $\operatorname{Sel}^2(E^d/\Q)$ is finite and the probability that $\operatorname{dim}_{\FF_2}\operatorname{Sel}^2(E^d/\Q)=n$ is positive for every integer $n\geq 2$.

Yu in~\cite{MR3747176} has proven that for an arbitrary elliptic curve $E$ over an arbitrary number field $K$, if the set $A_E$ of $2$-Selmer ranks of quadratic twists of $E$ contains an integer $c$, then it contains all integers larger than $c$ having the same parity as $c$. In addition, Yu also finds sufficient conditions on the set $A_E$ such that $A_E$ is equal to $\Z_{\geq t_E}$ for some number $t_E$ and gives an upper bound for $t_E$ in case all points in $E[2]$ are rational.

Instead of elliptic curves we can also consider quartics of genus one. One interesting question for genus one quartics over $\Q$ is whether they have a rational point. Namely, if a genus one quartic $C/\Q$ has a rational point then the curve $C$ is birationally isomorphic to an elliptic curve $E$. In this case studying rational points on the curve $C$ reduces to studying rational points on the elliptic curve $E$.

Surprisingly, for genus one quartics not much is known about the existence of a rational point. The following articles deal with the question whether quadratic twists of a genus one quartic have a rational point or not.

Kazalicki in his recent work~\cite{kazalicki2022quadratic} connects rational Diophantine $D(d)$-quintuples with quadratic twists of a genus one quartic $H$ and their $2$-Selmer groups. In his work he also partially answers the question for which primes $p$ the quadratic twist of $H$ by $p$ has a rational point.

\c Ciperiani and Ozman in \cite{ciperiani2015local} have found necessary and sufficient conditions for a point $P$ of an elliptic curve to lie in the image of the global trace map. These conditions can be used to determine whether a quadratic twist of a genus one curve has a rational points.

Motivated by those works, we are inspired to study the quadratic twists of genus one quartics. A natural first step in answering the question whether such twists have a rational point is examining the local properties of those twists. Thus, we try to answer the following question.

Let $f(x) \in \Z[x]$ be a irreducible and monic polynomial of degree $4$ and ${H\, :\, y^2=f(x)}$ the corresponding quartic of genus one. For a square-free integer $q$ we consider the quadratic twist of the curve $H$
\[
    H_q \, :\, qy^2=f(x).
 \]
Denote with $L=\{q\in \N :\, q \text{ is square-free and } H_q \text{ is ELS}\}$ the set of positive square-free integers $q$ for which $H_q$ is \textbf{everywhere locally solvable} (ELS), i.e.\ has a solution in $\R$ and in $\Qp$ for every prime $p$. For a real number $x$, let $L(x)= \#\{q\in L:\, q\leq x\}$ be the number of elements in $L$ that are less then or equal to $x$.
\begin{question}
    What is the asymptotic behaviour of the function $L(x)$ as $x \to \infty$?
\end{question}

We recall the following definitions for the factorization type of a polynomial and of an ideal.

\begin{definition}\label{def_fact_type:polynom}
    Let $f(x) \in \Z[x]$ be a polynomial with integer coefficients and $p$ a prime number.
    If $f(x)\Mod{p} = f_1(x)^{e_1}\dotsm f_g(x)^{e_g}$ where polynomials $f_1$, \ldots, $f_q \in \Z/p\Z[x]$ are irreducible, then we say that $f(x)$ has \textbf{factorization type} $(\operatorname{deg}(f_1),\, \operatorname{deg}(f_2),\, \dotsc ,\, \operatorname{deg}(f_g))$ modulo $p$.
\end{definition} 

\begin{definition}\label{def_fact_type:ideal}
    Let $K$ be a number field and $p$ a (rational) prime number. If $p\OO_K = P_1^{e_1}\dotsm P_g(x)^{e_g}$ where $P_1$, \ldots, $P_g$ are prime ideals in $K$, then we say that the ideal $(p)$ in the ring of integers $\OO_K$ has \textbf{factorization type} $(f_1,\, f_2,\, \dotsc,\, f_g)$ where $f_i$ is the inertia degree of $P_i$ above $p$.
\end{definition} 

\begin{remark}
    For the sake of brevity, if the ideal $(p)$ in the ring of integers $\OO_K$ has factorization type $(f_1,\, f_2,\, \dotsc,\, f_g)$, we instead say that the prime $p$ has factorization type $(f_1,\, f_2,\, \dotsc,\, f_g)$. 
\end{remark}

We first start with exploring the necessary and sufficient conditions on positive square-free integers $q$ for which $H_q$ is ELS. The obtained criterion tells us that $H_q$ is ELS if and only if all prime factors $p$ of $q$, that don't divide $\disc(f)$, have factorization type $(1,1,1,1)$, $(1,1,2)$ or $(1,3)$ and $q$ satisfies certain congruence conditions modulo $8$ and modulo odd prime factors of $\disc(f)$ (see Proposition~\ref{els_crit} for more details).

Next, we define the \textbf{generating Dirichlet series} $F(s)$ corresponding to the set $L$ (i.e.\ the set of positive square-free integers $q$ for which $H_q$ is ELS) as
\[
    F(s)=\sum_{n \in L} \frac{1}{n^s}.
\]

We also define an \textbf{auxiliary Dirichlet series} for $F(s)$
\[
    g(s)=\sideset{}{'}\prod\left(1+p^{-s}\right)
\]
where $\sideset{}{'}\prod$ denotes the product over primes $p$ that have factorization type $(1,1,1,1)$, $(1,1,2)$ or $(1,3)$. We are interested in these primes because they appear as prime factors in our square-free integers $q$ for which $H_q$ is ELS due to Proposition~\ref{els_crit}.

For a Dirichlet series $D(s)=\sum_{n=1}^\infty \frac{a_n}{n^s}$ and a Dirichlet character $\chi$ we denote with\newline ${D^\chi(s) = \sum_{n=1}^\infty \dfrac{a_n \cdot \chi(n)}{n^s}}$ \textbf{the twist of} $D$ \textbf{with the character} $\chi$.

Let $\chi_1$, $\chi_2$, $\chi_3$ and $\chi_4$ be all Dirichlet characters modulo $8$ defined as in the table:
\begin{center}
    \begin{tabular}{| c | c | c | c | c | }
    \hline
    $n$ & $1$ & $3$ & $5$ & $7$ \\
    \hline
    $\chi_1(n)$ & $1$ & $1$ & $1$ & $1$ \\
    \hline
    $\chi_2(n)$ & $1$ & $1$ & $-1$ & $-1$ \\
    \hline
    $\chi_3(n)$ & $1$ & $-1$ & $-1$ & $1$ \\
    \hline
    $\chi_4(n)$ & $1$ & $-1$ & $1$ & $-1$ \\
    \hline
    \end{tabular}
\end{center}
and let $\psi_r$ be the quadratic Dirichlet character modulo $r$, i.e.\ ${\psi_r (n) = \legendre{n}{r}}$.

Using the previous criterion we are able to express the generating Dirichlet series $F(s)$ via the auxiliary Dirichlet series $g(s)$ and its twists by the above defined Dirichlet characters (see Theorem~\ref{thm_F(s)_formula} for more details). To better illustrate the result in Theorem~\ref{thm_F(s)_formula} we give the following example:

\begin{example}\label{example_2}
    Let $f(x)=x^4-x+1$ be the same polynomial as in Example~\ref{example_1}. In Example~\ref{example_1} we described all the congruence conditions that $q$ has to satisfy modulo $8$ and modulo $229$ with sets $C_1$, $C_2$, \ldots , $C_{12}$. By plugging all this into Theorem~\ref{thm_F(s)_formula} and simplifying the expression we get
    \[
        F(s)=g(s)+\frac{1}{2}\cdot\frac{1}{229^s}g(s)+\frac{1}{2}\cdot\frac{1}{229^s}g^{\psi_{229}}(s).
    \]
\end{example}

Observe that the Dirichlet series $g(s)$ and its twists by Dirichlet characters correspond to \textbf{multiplicative frobenian functions} (see Definition~\ref{def_2.1.} for more details). This allows us to use the Landau-Selberg-Delange method developed for frobenian functions described in Section~\ref{sec:frobenian_functions} (following \cite{loughran2022frobenian}). By using this method we were finally able to calculate the asymptotic behaviour of $L(x)$.

\begin{corollary}\label{L(x)_formula_shortv}
    Let $G$ be the Galois group of the polynomial $f(x)$.  With the notation as above we have that
    \[
        L(x) = c_f \frac{x}{(\ln{x})^{m}}+\bigO{\frac{x}{(\ln{x})^\alpha}}
    \]
    where $c_f>0$ is a constant depending only on the polynomial $f(x)$,
    \[
    m= 
    \begin{cases}
        3/4, & \text{if } G \cong \left(\Z/2\Z\right) \times \left(\Z/2\Z\right) \text{ or } G \cong \Z/4\Z \\
        5/8, & \text{if } G \cong D_4 \\
        1/4, & \text{if } G \cong A_4 \\
        3/8, & \text{if } G \cong S_4\\
    \end{cases}
    \]
    and $\alpha$ is a constant depending only on the polynomial $f(x)$ such that ${m<\alpha \leq 1+m}$.
\end{corollary}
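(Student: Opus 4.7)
The plan is to start from the explicit decomposition of $F(s)$ furnished by Theorem~\ref{thm_F(s)_formula}, which expresses $F(s)$ as a finite linear combination
\[
    F(s) = \sum_{i} c_i\, g^{\chi_i}(s)
\]
of the auxiliary series $g(s)$ and its twists by Dirichlet characters (modulo $8$ and by the quadratic characters $\psi_r$ attached to odd primes $r\mid \disc(f)$), as illustrated in Example~\ref{example_2}. Since only finitely many such twists appear, it suffices to obtain an asymptotic for the summatory function of each $g^{\chi_i}(s)$ and combine them.

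The key observation is that every $g^{\chi}(s)$ is the Dirichlet series of a multiplicative frobenian function in the sense of Definition~\ref{def_2.1.}: for $p\nmid \disc(f)$ the condition that $p$ has factorization type $(1,1,1,1)$, $(1,1,2)$ or $(1,3)$ is determined by the conjugacy class of $\Frob_p$ in $G=\Gal(f/\Q)$; every Dirichlet character is frobenian via class field theory; and products of frobenian functions are frobenian. Consequently the Landau--Selberg--Delange machinery from Section~\ref{sec:frobenian_functions} (following \cite{loughran2022frobenian}) applies to each $g^{\chi}(s)$ and yields
\[
    \sum_{n\leq x}(g^{\chi})_{n} = c_{\chi}\,\frac{x}{(\ln x)^{1-\delta_{\chi}}} + \bigO{\frac{x}{(\ln x)^{2-\delta_{\chi}}}},
\]
where $\delta_{\chi}$ is the mean value of $\chi(p)$ over primes $p$ of the admissible factorization types, computed by Chebotarev's density theorem.

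The leading contribution comes from the untwisted piece, whose exponent $\delta$ equals the proportion of elements of $G\subseteq S_4$ having cycle type $(1,1,1,1)$, $(1,1,2)$ or $(1,3)$. A direct enumeration over the five transitive subgroups of $S_4$ gives
\[
    \delta =
    \begin{cases}
        1/4 & \text{if } G\cong (\Z/2\Z)\times(\Z/2\Z) \text{ or } \Z/4\Z,\\
        3/8 & \text{if } G\cong D_4,\\
        3/4 & \text{if } G\cong A_4,\\
        5/8 & \text{if } G\cong S_4,
    \end{cases}
\]
so $m = 1-\delta$ matches precisely the values in the statement. For each non-trivial $\chi_i$ appearing in the decomposition, the density $\delta_{\chi_i}$ is strictly smaller than $\delta$ because Chebotarev equidistribution forces cancellation in $\sum_p \chi_i(p)\,p^{-s}$ restricted to admissible primes; the corresponding $g^{\chi_i}(s)$ therefore contributes a term of strictly lower order. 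Gathering all contributions produces an error of the form $\bigO{x/(\ln x)^{\alpha}}$ with $m<\alpha\leq m+1$, the upper bound coming from the second-order term of the LSD expansion of the untwisted piece.

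The main obstacle I expect is verifying the strict positivity $c_f>0$: this reduces to showing that the coefficient of the untwisted $g(s)$ in Theorem~\ref{thm_F(s)_formula} is non-zero and that the Euler product defining $g(s)$ has a strictly positive leading coefficient at $s=1$. Both points follow from Chebotarev (each admissible factorization type is realised by a positive density of primes) together with an elementary check, using the orthogonality of the characters $\chi_i$, that no conspiracy among twisted contributions can cancel the untwisted term. A secondary, purely bookkeeping, task is to extract the exact value of $\alpha$ from the growth bounds on the $L$-functions that enter the frobenian LSD theorem of Section~\ref{sec:frobenian_functions}, but this is direct once the $\delta_{\chi_i}$ have been computed for each twist.
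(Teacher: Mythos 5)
Your derivation of the leading exponents $m=1-\delta$ via counting admissible cycle types in each of the five transitive subgroups of $S_4$ is correct, and the general strategy of applying the Landau--Selberg--Delange machinery termwise in the decomposition of $F(s)$ matches how the paper proves Theorem~\ref{L(x)_formula}. However, the argument has two genuine gaps, both concentrated in exactly the part the Corollary really adds beyond Theorem~\ref{L(x)_formula}, namely the positivity $c_f>0$.

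First, the claim that for every non-trivial twist $\chi_i$ one has $\delta_{\chi_i}<\delta$ ``because Chebotarev equidistribution forces cancellation'' is false in general. By Lemma~\ref{lemma_2.5.}(3), $m(\rho\chi)=m(\rho)$ holds precisely when $(\rho\chi)(p)=\rho(p)$ for all but finitely many $p$, and this can occur for non-trivial characters: it happens whenever $\chi$ is trivial on the (positive-density) set of admissible primes, a situation which is not excluded by the hypotheses. The paper's Theorem~\ref{L(x)_formula} explicitly allows this, which is why both $\alpha$ and $c_f$ there are defined by a sum over all indices with $m(\rho\chi_i\psi_{r_{j_1}}\cdots\psi_{r_{j_k}})=m(\rho)$, not just over the trivial character. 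Several twisted terms can contribute to the main coefficient, and the filtration lemmas show that they enter with signs $\varepsilon=\pm1$, so cancellation in the leading coefficient is a real possibility that has to be ruled out, not dismissed.

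Second, and as a consequence, the assertion that $c_f>0$ ``follows from Chebotarev together with an elementary check, using the orthogonality of the characters $\chi_i$, that no conspiracy among twisted contributions can cancel'' is not a proof; it is a restatement of the problem. The paper's actual argument is structurally different and avoids analysing the signed sum defining $c_f$ altogether. It produces an explicit positive-density subset of $L$: take $n=8\cdot3\cdot5\cdot\disc(f)$ and count square-free $q\equiv 1\Mod{n}$ all of whose prime factors have an admissible factorization type. For such $q$ the twist $H_q$ is ELS because $q$ is a local square at every bad prime (so $H_q\cong H$ over $\Q_p$ there, and $H$ itself is ELS since $f$ is monic and $H$ has a rational point at infinity), while at good primes dividing $q$ the factorization-type condition supplies an $\F_p$-point that lifts by Hensel. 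Writing the congruence condition as $\frac{1}{\varphi(n)}\sum_{\chi\in X(n)}\chi$, one checks that the characters for which $a\chi$ has the same mean as $a$ contribute the full main term and there are $r\geq1$ of them (the trivial character always does), so the count is $\sim rc\,x/(\ln x)^m$ with $rc>0$. Since this is a lower bound for $L(x)$, it forces $c_f>0$. You would need something of this form; the orthogonality of the $\chi_i$ alone does not do it.

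A minor additional remark: your argument essentially re-proves Theorem~\ref{L(x)_formula}, which the paper already establishes before the Corollary; in the paper, the Corollary's proof is just the reduction $m=1-m(\rho)$, the trivial observation that $m<\alpha\leq 1+m$ from the definition of $\alpha$, and the positivity argument above.
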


Lastly, motivated by Novak's Master thesis~\cite{novak2022lokalna}, in Proposition~\ref{thm_g(s)_via_dedekind} we give formulas for the auxiliary Dirichlet series $g(s)$ via Dedekind zeta functions of certain number fields. 

With this formulas for $g(s)$ one might try to apply Perron's formula and then estimate the integral using the "key-hole" contour as described in \cite{novak2022lokalna} (Chapter $4$) or another contour as described in \cite{serre1974divisibilite} (Chapter~$2$) or try to apply the Selberg-Delange method for Dirichlet series sufficiently close to a (complex) power of $\zeta (s)$ to evaluate the summatory function of the coefficients as in \cite{tenenbaum2015introduction} (Chapter~$2.5$) in order to obtain better estimates for the error term in Corollary~\ref{L(x)_formula_shortv}.

\begin{proposition}\label{thm_g(s)_via_dedekind}
    Let $g(s)$ be the auxiliary Dirichlet series defined as before, $G$ the Galois group of the polynomial $f(x)$, $L$ the splitting field of the polynomial $f(x)$ and $K=\Q(\alpha)$ where $\alpha \in \C$ is a root of the polynomial $f(x)$. The following relations hold for $g(s)$:
    \begin{enumerate}[i)]
        \item If $G \cong \left(\Z/2\Z\right) \times \left(\Z/2\Z\right)$ or $G \cong \Z/4\Z$, then
        \[  
            g(s)^4 \sim \frac{\zeta_K (s)}{\zeta_K (2s)}.
        \]
        \item If $G \cong D_4$, then
        \[
            g(s)^8 \sim \left( \frac{\zeta_K (s)}{\zeta_K (2s)} \right)^4 \cdot \left( \frac{\zeta_L (s)}{\zeta_L (2s)} \right)^{-1}.
        \]
        \item If $G \cong A_4$, then
        \[
            g(s)^4 \sim \left( \frac{\zeta_K (s)}{\zeta_K (2s)} \right)^4 \cdot \left( \frac{\zeta_L (s)}{\zeta_L (2s)} \right)^{-1}.
        \]
        \item If $G \cong S_4$, then
        \[
            g(s)^{24} \sim \left( \frac{\zeta_K (s)}{\zeta_K (2s)} \right)^{12} \cdot \left( \frac{\zeta_L (s)}{\zeta_L (2s)} \right)^{-5} \cdot \left( \frac{\zeta_{L^{\Z/3\Z}} (s)}{\zeta_{L^{\Z/3\Z}} (2s)} \right)^6.
        \]
    \end{enumerate}
    Here $g_1 (s) \sim g_2 (s)$ means that $g_1 (s)/g_2 (s)$ is a "nice" function (holomorphic on $\operatorname{Re}(s)>\frac{1}{2}$ and bounded on the strips $\operatorname{Re}(s)=\frac{1}{2}+\delta$ for every $\delta >0$) and $L^{\Z/3\Z}$ is the fixed field of $L$ by a subgroup $H \cong \Z/3\Z$ of the Galois group $G$.
\end{proposition}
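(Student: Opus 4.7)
The plan is to translate each relation $\sim$ into an identity of class functions on the Galois group $G \subseteq S_4$ and then check ``niceness'' by an Euler-product comparison. By Dedekind's theorem, for every prime $p$ unramified in the splitting field $L/\Q$ the factorisation type of $p$ in any intermediate field $L^H$ is controlled by the action of $\Frob_p$ on the coset space $G/H$; in particular, the unramified local Euler factor of $\zeta_{L^H}(s)/\zeta_{L^H}(2s)$ equals $\prod_{\mathfrak{p}\mid p}(1+N\mathfrak{p}^{-s})$ and admits the expansion
\[
1+\chi_{G/H}(\Frob_p)\,p^{-s}+O(p^{-2s}),
\]
where $\chi_{G/H} = \Ind_H^G \mathbb{1}$ is the associated permutation character. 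On the other hand, by the definition of $g(s)$, its unramified Euler factor at $p$ equals $(1+p^{-s})^{\mathbb{1}_{\mathrm{good}}(\Frob_p)}$, where $\mathbb{1}_{\mathrm{good}}$ is the class function on $G$ that is $1$ on elements with cycle type $(1,1,1,1)$, $(1,1,2)$ or $(1,3)$ and $0$ otherwise, i.e.\ on the elements with at least one fixed point on the four roots of $f$.

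For each of the four cases I would tabulate, over the conjugacy classes of $G$, the values of $\mathbb{1}_{\mathrm{good}}$ and of the permutation characters $\chi_{G/H_i}$ corresponding to the subfields appearing in the formula: $H = \mathrm{Stab}_G(\alpha)$ (index $4$) giving $K$; $H = \{e\}$ giving $L$; and $H \cong \Z/3\Z$ giving $L^{\Z/3\Z}$ in the $S_4$ case. A finite linear-algebra computation then yields integers $n$ and $(a_i)$ for which the class-function identity
\[
n\cdot \mathbb{1}_{\mathrm{good}} \;=\; \sum_i a_i\,\chi_{G/H_i}
\]
holds with the exponents appearing in the proposition. For $V_4$ and $C_4$ only the identity class is good and one obtains $4\,\mathbb{1}_{\mathrm{good}} = \chi_K$; for $D_4$ and $A_4$ one additionally uses the regular-representation character $\chi_L$ to cancel the over-counting on non-good classes; for $S_4$ the function $\mathbb{1}_{\mathrm{good}}$ is not in the $\Q$-span of $\{\chi_K,\chi_L\}$ and the third permutation character associated to $\Z/3\Z$ is genuinely required.

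Granting this identity, I form
\[
R(s)\;:=\;g(s)^n\cdot\prod_i\left(\frac{\zeta_{L^{H_i}}(s)}{\zeta_{L^{H_i}}(2s)}\right)^{-a_i}
\]
and show it is ``nice''. At every unramified prime the local factor $R_p(s)$ is a finite product of terms $(1+p^{-ks})^{\pm m}$ with $k\geq 2$: the $k=1$ contributions cancel exactly, because the class-function identity forces the coefficient of $p^{-s}$ in $\log R_p(s)$ to vanish on every Frobenius class. Consequently $R_p(s) = 1+O(p^{-2s})$ uniformly in $p$, the product $\prod_p R_p(s)$ converges absolutely and uniformly on any half-plane $\operatorname{Re}(s)\geq \tfrac12+\delta$, and the finitely many ramified primes contribute only a non-vanishing bounded factor. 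Hence $R(s)$ is holomorphic on $\operatorname{Re}(s)>\tfrac12$ and bounded on the strips $\operatorname{Re}(s)=\tfrac12+\delta$, which is precisely ``$\sim$'' in the statement.

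The main obstacle is the case $G \cong S_4$: one must choose the correct index-$8$ subfield and then solve the class-function identity simultaneously on all five conjugacy classes, since neither $\chi_K$ nor $\chi_L$ alone can distinguish between transpositions, $3$-cycles and $4$-cycles in the way $\mathbb{1}_{\mathrm{good}}$ does. Once this linear-algebraic identity is fixed, the remaining cases $V_4, C_4, D_4, A_4$ follow immediately and the ``niceness'' argument applies uniformly across all four relations.
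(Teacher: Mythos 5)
Your reformulation via permutation characters $\chi_{G/H}=\Ind_H^G\mathbb{1}$ and the class-function identity $n\cdot\mathbb{1}_{\mathrm{good}}=\sum_i a_i\chi_{G/H_i}$ is, after unwinding, the same argument as the paper's: the paper also compares the Euler products of $\zeta_{L^H}(s)/\zeta_{L^H}(2s)$ to $g(s)$ factorization-type by factorization-type and solves for the exponents, just by iterative substitution rather than by solving the linear system on all conjugacy classes at once. Your phrasing is cleaner --- it makes transparent that one linear equation per unramified conjugacy class must hold, and why a third field is forced for $S_4$ (since $\mathbb{1}_{\mathrm{good}}$ does not lie in the $\Q$-span of $\chi_K$ and $\chi_L$, the permutation characters failing to separate transpositions from $3$-cycles) --- but it is not a genuinely different route, and the ``niceness'' verification via $R_p(s)=1+O(p^{-2s})$ matches the paper's treatment of the higher-degree and ramified local factors.

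Be warned, however, that for $G\cong S_4$ the linear algebra does \emph{not} reproduce the exponent $-5$ on $\zeta_L(s)/\zeta_L(2s)$. On the classes $(1^4),(1^2\,2),(2^2),(1\,3),(4)$ one has $\mathbb{1}_{\mathrm{good}}=(1,1,0,1,0)$, $\chi_K=(4,2,0,1,0)$, $\chi_L=(24,0,0,0,0)$ and $\chi_{L^{\Z/3\Z}}=(8,0,0,2,0)$; the identity $24\,\mathbb{1}_{\mathrm{good}}=a\chi_K+b\chi_L+c\chi_{L^{\Z/3\Z}}$ on $(1^2\,2)$ forces $a=12$, on $(1\,3)$ forces $c=6$, and on the identity class gives $24=48+24b+48$, i.e.\ $b=-3$. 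Thus part~iv) should read
\[
g(s)^{24}\sim\left(\frac{\zeta_K(s)}{\zeta_K(2s)}\right)^{12}\left(\frac{\zeta_L(s)}{\zeta_L(2s)}\right)^{-3}\left(\frac{\zeta_{L^{\Z/3\Z}}(s)}{\zeta_{L^{\Z/3\Z}}(2s)}\right)^{6},
\]
the paper's $-5$ coming from a mis-simplified exponent in its iterative computation (it asserts $\prod_{(1^4)}(1+p^{-s})^{-4}\sim(\zeta_L(s)/\zeta_L(2s))^{-1/3}$, whereas $-4/24=-1/6$). Your method, carried out explicitly, catches this; so the step in your proposal where you assert that the linear algebra yields ``the exponents appearing in the proposition'' needs to be replaced by the corrected solution in case iv).
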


\section{Proof of the ELS criterion}\label{sec:2}

In this section we prove the following criterion for positive square-free $q$'s for which $H_q$ is everywhere locally solvable.

\begin{proposition}[ELS criterion] \label{els_crit}
    Let $f(x) \in \Z[x]$ be a irreducible and monic polynomial of degree $4$, $K=\Q(\alpha)$ a number field where $\alpha$ is a root of $f(x)$. Let $q$ be a positive square-free integer and ${H_q \, :\, qy^2=f(x)}$ a curve. The curve $H_q$ is ELS if and only if the following conditions hold for $q$:
    \begin{enumerate}[i)]
        \item If a prime $p \nmid \disc(f)$ and $p \mid q$, then the ideal $(p)$ in the ring of integers $\OO_K$ has a factorization type $(1,1,1,1)$, $(1,1,2)$ or $(1,3)$.
        \item If a prime $p \mid \disc(f)$ and $p>2$. We choose any integer $u$ such that $\legendre{u}{p}=-1$ and observe the curves $H_1$, $H_u$, $H_p$ and $H_{up}$ and their solubility in $\Qp$. 
        
        For $p \nmid q$ we look at the following cases:
        \begin{enumerate}[1)]
            \item If $H_u$ has a solution in $\Qp$ then there are no conditions for $q$ modulo $p$.
            \item If $H_u$ doesn't have a solution in $\Qp$ than $\legendre{q}{p}=1$.
        \end{enumerate}
        
        For $p\mid q$ we write $q=pq_1$ and look at the following cases:
        \begin{enumerate}[1)]
            \item If $H_p$ and $H_{up}$ have a solution in $\Qp$, then there are no conditions for $q$ modulo $p$.
            \item If $H_p$ does and $H_{up}$ doesn't have a solution in $\Qp$, then $\legendre{q_1}{p}=1$.
            \item If $H_p$ doesn't and $H_{up}$ does have a solution in $\Qp$, then $\legendre{q_1}{p}=-1$.
            \item If $H_p$ and $H_{up}$ haven't a solution in $\Qp$, then the prime $p \nmid q$.
        \end{enumerate}
        \item If a prime $p \mid \disc(f)$ and $p=2$. We look at $\{1,-1,5,-5,2,-2,10,-10\}$ the set of representatives for $\Q_2^{\times}/(\Q_2^{\times})^2$ and the curves $H_1$, $H_{-1}$, $H_{5}$, $H_{-5}$, $H_{2}$, $H_{-2}$, $H_{10}$ and $H_{-10}$.
        
        For $2 \nmid q$ the conditions for $q$ modulo $8$ are the union of the following conditions: 
        
        If $H_u$ has a solution in $\Q_2$, then $q \equiv u \Mod{8}$, for $u \in \{\pm 1, \pm 5\}$.

        For $2 \mid q$ we write $q=2q_1$ and the conditions for $q_1$ modulo $8$ are the union of the following conditions : 
        
        If $H_{2u}$ has a solution in $\Q_2$, then $q_1 \equiv u \Mod{8}$, for $u \in \{\pm 1, \pm 5\}$.
    \end{enumerate}
\end{proposition}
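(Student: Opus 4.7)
The plan is to analyze $\Q_v$-solubility place by place. At the archimedean place, since $q>0$ and $f$ is monic of degree $4$, one has $f(x)/q\to+\infty$ as $x\to\infty$, so $H_q(\R)\neq\emptyset$ always. For primes $p\nmid q\disc(f)$ the curve $H_q$ has good reduction and a standard argument (smooth proper model over $\Z_p$, Hasse--Weil on $\F_p$-points, and Hensel's lemma) shows $H_q(\Qp)\neq\emptyset$, yielding no constraint on $q$. The interesting analysis therefore concerns case (i), primes $p\mid q$ with $p\nmid\disc(f)$, and cases (ii)--(iii), primes dividing $\disc(f)$.

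For case (i), write $q=pq'$ with $p\nmid q'$. Since $p\nmid\disc(f)$, the factorization of $(p)$ in $\OO_K$ matches the factorization type of $f\bmod p$ by Dedekind's theorem. If $f$ has a linear factor $\overline{x-x_0}$ mod $p$, lift $x_0$ via Hensel to a root $\alpha\in\Z_p$ of $f$; locally $f(x)=(x-\alpha)g(x)$ with $g(\alpha)\in\Z_p^\times$. Setting $y=1$ and $x=\alpha+pt$ reduces $qy^2=f(x)$ to $F(t):=t\,g(\alpha+pt)-q'=0$, which has a root $t\in\Z_p$ by Hensel (the initial guess $t_0=q'/g(\alpha)\in\Z_p^\times$ satisfies $\ord_p(F(t_0))\geq 1$ and $\ord_p(F'(t_0))=0$, a condition valid even at $p=2$). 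Conversely, if $f\bmod p$ has no linear factor (types $(2,2)$ or $(4)$), a valuation argument rules out $\Qp$-points: for $x\in\Z_p$ one has $\ord_p(f(x))=0$ since $f$ has no root mod $p$, while for $x\in\Qp$ with $\ord_p(x)<0$ the monic leading term forces $\ord_p(f(x))=-4\ord_p(x)\in 2\Z$; either way $\ord_p(f(x))$ is even, contradicting $\ord_p(qy^2)=1+2\ord_p(y)$ odd.

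For cases (ii)--(iii) the key observation is that $\Qp$-solubility of $H_q$ depends only on the class of $q$ in $\Qp^\times/(\Qp^\times)^2$, since $H_q\cong H_{qc^2}$ over $\Qp$ via $y\mapsto y/c$. For odd $p$ this group has order four with representatives $\{1,u,p,up\}$, and a ``large $x$'' argument (take $x=p^{-k}$ for large $k$, so $f(x)=x^4(1+O(p^k))$ with $1+O(p^k)$ a square by Hensel) shows that $H_1$ is always $\Qp$-soluble. Hence the sub-case $p\nmid q$ only asks whether the class of $u$ also gives a soluble twist: if yes, no condition on $q$; if no, $q$ must be a quadratic residue mod $p$. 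The sub-case $p\mid q$ partitions analogously by the solubility of $H_p$ and $H_{up}$, giving the four listed alternatives. For $p=2$ the same analysis runs with the eight square classes represented by $\{\pm 1,\pm 5,\pm 2,\pm 10\}$: the four unit classes detect $q\bmod 8$ while the four non-unit classes detect $q_1=q/2\bmod 8$, producing the stated union of congruence conditions.

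The main technical obstacle is twofold. First, verifying automatic $\Qp$-solubility for primes of good reduction away from $q$ requires some care at small $p$ (especially $p=2$ when $2\nmid\disc(f)$), where one must use the Hasse--Weil bound on the smooth projective completion of $H_q$ rather than merely count affine $\F_p$-points. Second, the ``large $x$'' argument showing that the square class of $1$ in $\Qp^\times$ always gives a soluble twist must accommodate $p=2$, where the step ``$1+O(p^k)$ is a square'' needs $k\geq 3$ via $1+8\Z_2\subset(\Z_2^\times)^2$ rather than the cleaner $1+p\Z_p\subset(\Z_p^\times)^2$ valid for odd $p$.
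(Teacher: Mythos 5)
Your proof is correct and takes essentially the same approach as the paper: split into forward and converse directions, analyze $\Q_v$-solubility place by place, treat good primes away from $q$ by Hasse--Weil plus Hensel (the paper invokes a theorem of Siksek for the same purpose), and handle bad primes via the observation that $\Qp$-solubility of $H_q$ depends only on the square class of $q$ in $\Qp^\times/(\Qp^\times)^2$, which is exactly the paper's Lemma~\ref{lemma_iso}. The only noteworthy differences are cosmetic: in case (i) the paper simply produces the $\Qp$-point $(x,0)$ where $x\in\Z_p$ is a Hensel lift of a root of $f\bmod p$, whereas you work in the slice $y=1$, which gets the same conclusion at the cost of an extra Hensel step; and your valuation argument showing that types $(2,2)$ and $(4)$ give no affine $\Qp$-point when $p\mid q$ makes explicit a detail the paper's forward direction elides (one should note additionally that $q$ is a non-square in $\Qp^\times$ since $\ord_p(q)=1$, so the two points at infinity on the smooth model are also not $\Qp$-rational). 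Your observation that $H_1$ is always $\Qp$-soluble via a large-$x$ argument is implicit in the paper's statement of the criterion and is used in the paper's converse step $3^\circ$.
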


In the following example we demonstrate which conditions modulo $8$ and modulo prime factors of $\disc(f)$ we get after applying this criterion.

\begin{example}\label{example_1}
    Let $f(x)=x^4-x+1$. We have that $\disc(f)=229$ and let us choose $u=2$ as our non-square residue modulo $229$. 
    
   One can check (for example using MAGMA) that the curves $H_1$, $H_2$ and $H_{229}$ have a point in $\Q_{229}$ while the curve $H_{2\cdot 229}$ doesn't. Similarly, we check that curves $H_1$, $H_{-1}$, $H_{5}$ and $H_{-5}$ have a point in $\Q_2$ while the curves $H_2$, $H_{-2}$, $H_{10}$ and $H_{-10}$ don't have a point in $\Q_2$.

    Let us denote the conditions modulo $8$ and modulo $229$ that we get from the above criterion with the following sets:

    \begin{align*}
        C_1 &= \left\{ q\in\N \text{ square-free }:\, \legendre{q}{229}=1,\, q\equiv 1\Mod{8} \right\}, \\
        C_2 &= \left\{ q\in\N \text{ square-free } :\, \legendre{q}{229}=1,\, q\equiv 3\Mod{8} \right\}, \\
        C_3 &= \left\{ q\in\N  \text{ square-free }:\, \legendre{q}{229}=1,\, q\equiv 5\Mod{8} \right\}, \\
        C_4 &= \left\{ q\in\N \text{ square-free } :\, \legendre{q}{229}=1,\, q\equiv 7\Mod{8} \right\}, \\
        C_5 &= \left\{ q\in\N \text{ square-free } :\, \legendre{q}{229}=-1,\, q\equiv 1\Mod{8} \right\}, \\
        C_6 &= \left\{ q\in\N \text{ square-free } :\, \legendre{q}{229}=-1,\, q\equiv 3\Mod{8} \right\}, \\
        C_7 &= \left\{ q\in\N \text{ square-free } :\, \legendre{q}{229}=-1,\, q\equiv 5\Mod{8} \right\}, \\
        C_8 &= \left\{ q\in\N \text{ square-free } :\, \legendre{q}{229}=-1,\, q\equiv 7\Mod{8} \right\}, \\
        C_9 &= \left\{ 229q_1 \text{ square-free } :\,q_1\in\N,\, \legendre{q_1}{229}=1,\, q_1\equiv 1\Mod{8} \right\}, \\
        C_{10} &= \left\{ 229q_1 \text{ square-free } :\,q_1\in\N,\, \legendre{q_1}{229}=1,\, q_1\equiv 3\Mod{8} \right\}, \\
        C_{11} &= \left\{ 229q_1 \text{ square-free } :\,q_1\in\N,\, \legendre{q_1}{229}=1,\, q_1\equiv 5\Mod{8} \right\}, \\
        C_{12} &= \left\{ 229q_1 \text{ square-free } :\,q_1\in\N,\, \legendre{q_1}{229}=1,\, q_1\equiv 7\Mod{8} \right\}.
    \end{align*}

    From Proposition~\ref{els_crit} we have that all positive square-free integers $q$ for which $H_q$ is ELS are exactly does in the set $\cup_{i=1}^{12}C_i$ and whose prime factors, except eventually $229$, have factorisation type $(1,1,1,1)$, $(1,1,2)$ or $(1,3)$. 
\end{example}

\begin{proof}[Proof of Proposition~\ref{els_crit}]
Let's suposs that $H_q$ is ELS.
\begin{enumerate}[i)]
    \item Let $p$ be a prime such that $p \nmid \cdot \disc(f)$ and $p \mid q$. Since $H_q$ has a solution in $\Qp$ it also has a solution in $\Fp$. Using the fact that $p \mid q$ we deduce that the polynomial $f(x)$ has a root modulo $p$. Therefore $f(x)$ has a factorization type $(1,1,1,1)$, $(1,1,2)$ or $(1,3)$ modulo $p$. By Dedekind's theorem it follows now that the ideal $(p)$ in $\OO_K$ has the same factorization type as $f(x)$ modulo $p$, that is $(1,1,1,1)$, $(1,1,2)$ or $(1,3)$.

    \item Let $p>2$ be a prime such that $p \mid \disc(f)$. We will use the following lemma:
    \begin{lemma}\label{lemma_iso}
        Let $q$ and $r$ be integers and $p$ a prime. If $q \sim r$ in $\Qp^\times/(\Qp^\times)^2$ (i.e.\ $q=a^2r$ for some $a\in \Qp$), then $H_q(\Qp)$ and $H_r(\Qp)$ are isomorphic.
    \end{lemma}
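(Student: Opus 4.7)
The plan is to prove Lemma~\ref{lemma_iso} by exhibiting the obvious change of variables in the $y$-coordinate. Writing $q = a^{2} r$ with $a \in \Qp^{\times}$, I would define
\[
    \phi \colon H_q \longrightarrow H_r, \qquad (x,y) \longmapsto (x, ay),
\]
and its candidate inverse $(x, y') \mapsto (x, y'/a)$. Both maps are defined over $\Qp$ because $a$ and $1/a$ lie in $\Qp$.

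The key verification is a single substitution: if $(x,y)\in H_q(\Qp)$, then $qy^{2} = f(x)$, so $r(ay)^{2} = a^{2}r\, y^{2} = qy^{2} = f(x)$, showing $(x, ay)\in H_r(\Qp)$. Symmetrically, if $(x,y')\in H_r(\Qp)$, then $q(y'/a)^{2} = r(y')^{2}/1 = f(x)$, so the inverse lands in $H_q(\Qp)$. This makes $\phi$ and its inverse mutually inverse $\Qp$-morphisms, hence a $\Qp$-isomorphism, and in particular a bijection $H_q(\Qp)\cong H_r(\Qp)$.

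There is one mild technical point I would address: the genus one quartic is most naturally viewed as a smooth projective curve (e.g.\ in the weighted projective space $\P(1,2,1)$), which for a monic $f$ of degree $4$ has two points at infinity cut out by $y^{2}=x^{4}$. Since $\phi$ only rescales the $y$-coordinate, it extends canonically to the smooth completion and identifies the points at infinity on both sides, so no separate argument for the boundary is needed.

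There is essentially no obstacle: the lemma is just the classical statement that twisting by a square is trivial over the base field. The only thing to be careful about is the interpretation of the conclusion — either as a $\Qp$-isomorphism of curves or as a bijection of sets of $\Qp$-points. Both follow from the same one-line computation, and the bijection of $\Qp$-points is exactly what is needed downstream in Proposition~\ref{els_crit} to reduce local solubility of $H_q$ to that of the finitely many representatives $H_u$ for $u\in \Qp^{\times}/(\Qp^{\times})^{2}$.
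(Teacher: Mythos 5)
Your proof is correct and takes essentially the same approach as the paper: both exhibit the explicit rescaling $(x,y)\mapsto(x,ay)$ as the isomorphism. You have, in fact, been more careful than the paper (which states the map in a direction that contains a small typo) and added a useful remark about extending to the smooth completion.
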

    \begin{proof}[Proof of the Lemma]
        Since $q \sim r$ there exists $a \in \Qp^\times$ such that $q=a^2r$. The isomorphism ${\varphi : E_r(\Qp) \to H_q(\Qp)}$ is then defined with $\varphi(x,y)=(x,ay)$. 
    \end{proof}
    Now observe that for square-free integers $q$ the following holds:
    \begin{enumerate}
        \item $q \sim 1$ in $\Qp^\times/(\Qp^\times)^2$ if and only if $\legendre{q}{p}=1$,

        \item $q \sim u$ in $\Qp^\times/(\Qp^\times)^2$ if and only if $\legendre{q}{p}=-1$,

        \item $q \sim p$ in $\Qp^\times/(\Qp^\times)^2$ if and only if $q=pq_1$ and $\legendre{q_1}{p}=1$,

        \item $q \sim pu$ in $\Qp^\times/(\Qp^\times)^2$ if and only if $q=pq_1$ and $\legendre{q_1}{p}=-1$.
    \end{enumerate} 
    Now depending whether $q \sim 1$, $u$, $p$ or $pu$ in $\Qp^\times/(\Qp^\times)^2$ using the above equivalences and Lemma~\ref{lemma_iso} we get the desired conditions on $q$.  

    \item Integers that are squares in $\Q_2$ are of the form $2^{2k}l$ where $k$ is a non-negative integer and $l \equiv 1 \Mod{8}$. Using this we get that for square-free integers $q$ the following holds:
    \begin{enumerate}
        \item $q \sim \pm 1$, $\pm 5$ in $\Q_2^\times/(\Q_2^\times)^2$ if and only if $q \equiv \pm 1$, $\pm 5 \Mod{8}$ (respectively),

        \item $q \sim \pm 2$, $\pm 10$ in $\Q_2^\times/(\Q_2^\times)^2$ if and only if $q=2q_1$ and $q_1 \equiv \pm 1$, $\pm 5 \Mod{8}$ (respectively).
    \end{enumerate}
    Again by using the above equivalences and Lemma~\ref{lemma_iso} we get the desired conditions.
\end{enumerate}

Now let us assume that the conditions i) - iii) in Theorem~\ref{els_crit} hold. $H_q$ has obviously a solution in $\R$ since $q>0$ and the polynomial $f(x)$ is monic. It remains to show that $H_q$ has a solution in $\Qp$ for every prime $p$. Let $p$ be a prime.
\begin{enumerate}[$1^\circ$]
    \item If $p \nmid 2q\disc(f)$, then we have that $H_q$ is a genus $1$ curve and has good reduction modulo $p$. By Theorem $3$ in \cite{siksek2007genus} we immediately get that $H_q$ has a solution in $\Qp$.

    \item If $p \mid q$ and $p \nmid \disc(f)$, then we have that $(p)$ has factorization type $(1,1,1,1)$, $(1,1,2)$ or $(1,3)$ in $\OO_K$. By Dedekind's theorem it follows that the polynomial $f(x)$ has also factorization type $(1,1,1,1)$, $(1,1,2)$ or $(1,3)$ modulo $p$. This means that $f(x)$ has a root $x_0\in \Fp$. Since $p \nmid \disc(f)$ we know that the root $x_0$ is a singular root of $f(x)$ modulo $p$, i.e.\ $f'(x_0)\not\equiv 0 \Mod{p}$. This allows us now to use Hensel's lemma to lift the root $x_0$ of $f(x)$ modulo $p$ to the root $x\in \Qp$ of $f(x)$ which gives us that the point $(x,0)\in H_q(\Qp)$.

    \item Assume that $p\mid \disc(f)$. Depending on which conditions we have for $q$ by using the isomorphism from Lemma~\ref{lemma_iso} we get that $H_q(\Qp)\neq \emptyset$ since it will be isomorphic to some of the curves $H_1$, $H_u$, $H_p$ or $H_{pu}$ (if $p>2$) or to $H_1$, $H_{-1}$, $H_{5}$, $H_{-5}$, $H_{2}$, $H_{-2}$, $H_{10}$ or $H_{-10}$ (if $p=2$).  
\end{enumerate}

\end{proof}

\section{Filtration method for primes of bad reduction and proof of Theorem~\ref{thm_F(s)_formula}}\label{sec:3}

We want to express $F(s)$ in terms of $g(s)$ and its twist by certain Dirichlet characters. In order to do so we need to "filter out" all the positive square-free integers appearing in $g(s)$ that satisfy certain congruent conditions modulo $8$ and modulo prime factors of $\disc(f)$. 

Let $\chi_1$, $\chi_2$, $\chi_3$ and $\chi_4$ be all Dirichlet characters modulo $8$ defined as in the table:
\begin{center}
    \begin{tabular}{| c | c | c | c | c | }
    \hline
    $n$ & $1$ & $3$ & $5$ & $7$ \\
    \hline
    $\chi_1(n)$ & $1$ & $1$ & $1$ & $1$ \\
    \hline
    $\chi_2(n)$ & $1$ & $1$ & $-1$ & $-1$ \\
    \hline
    $\chi_3(n)$ & $1$ & $-1$ & $-1$ & $1$ \\
    \hline
    $\chi_4(n)$ & $1$ & $-1$ & $1$ & $-1$ \\
    \hline
    \end{tabular}
\end{center}

Let $g(s) = \sideset{}{'}\prod (1+p^{-s})= \sideset{}{'}\sum q^{-s}$ be defined as before where $\sideset{}{'}\sum $ denotes the sum over all square-free integers whose prime factors have factorization type $(1,1,1,1)$, $(1,1,2)$ or $(1,3)$. Let's say that we want to filter all square-free integers $q$ from that sum which are congruent to $1$ modulo $8$. We can do that by calculating the following sum :
\[
    g(s)+g^{\chi_2}(s)+g^{\chi_3}(s)+g^{\chi_4}(s) = \sideset{}{'}\sum \frac{\chi_1(q)+\chi_2(q)+\chi_3(q)+\chi_4(q)}{q^s}=4\sideset{}{'}\sum_{q\equiv 1 \Mod{8}} \frac{1}{q^s}.
\]
We can do similar things to filter out the $q$'s that are congruent to $3$, $5$ or $7$ modulo $8$ and get the following lemma:
\begin{lemma}[filtration modulo $8$] \label{filter_mod8}
    With the notation as above the following holds:
    \begin{enumerate}[i)]
        \item \[ \sideset{}{'}\sum_{q\equiv 1 \Mod{8}} \frac{1}{q^s} = \frac{1}{4}(g(s)+g^{\chi_2}(s)+g^{\chi_3}(s)+g^{\chi_4}(s)). \]
        
        \item \[ \sideset{}{'}\sum_{q\equiv 3 \Mod{8}} \frac{1}{q^s} = \frac{1}{4}(g(s)+g^{\chi_2}(s)-g^{\chi_3}(s)-g^{\chi_4}(s)). \]
        
        \item \[ \sideset{}{'}\sum_{q\equiv 5 \Mod{8}} \frac{1}{q^s} = \frac{1}{4}(g(s)-g^{\chi_2}(s)-g^{\chi_3}(s)+g^{\chi_4}(s)). \]
    
        \item \[ \sideset{}{'}\sum_{q\equiv 7 \Mod{8}} \frac{1}{q^s} = \frac{1}{4}(g(s)-g^{\chi_2}(s)+g^{\chi_3}(s)-g^{\chi_4}(s)). \]
    \end{enumerate}
\end{lemma}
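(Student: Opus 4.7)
The plan is a single application of orthogonality of Dirichlet characters modulo $8$. The group $(\Z/8\Z)^\times$ has order $4$, and the four characters $\chi_1,\chi_2,\chi_3,\chi_4$ displayed in the table exhaust its dual; moreover they are all real-valued. Orthogonality therefore gives, for each fixed $a\in\{1,3,5,7\}$,
\[
\sum_{i=1}^{4}\chi_i(a)\chi_i(q)=\begin{cases}4,&q\equiv a\Mod{8},\\0,&q\text{ odd and }q\not\equiv a\Mod{8},\\0,&q\text{ even.}\end{cases}
\]

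Next, I would multiply this identity by $q^{-s}$ and sum over all positive square-free $q$ whose prime factors have factorization type $(1,1,1,1)$, $(1,1,2)$, or $(1,3)$. The right-hand side collapses to $4\,\sideset{}{'}\sum_{q\equiv a\Mod{8}} q^{-s}$, since the only $q$ that survive the character sum are those in the prescribed residue class. The left-hand side is, by definition of the twisted Dirichlet series, $\sum_{i=1}^{4}\chi_i(a)\,g^{\chi_i}(s)$. Using $g^{\chi_1}(s)=g(s)$ (the principal character modulo $8$ acts as $1$ on the odd integers present in $g(s)$) and dividing by $4$ produces the uniform formula
\[
\sideset{}{'}\sum_{q\equiv a\Mod{8}}\frac{1}{q^s}=\frac{1}{4}\bigl(\chi_1(a)g(s)+\chi_2(a)g^{\chi_2}(s)+\chi_3(a)g^{\chi_3}(s)+\chi_4(a)g^{\chi_4}(s)\bigr).
\]

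The final step is purely bookkeeping: substitute $a=1,3,5,7$ in turn and read the signs $\chi_i(a)$ off the character table. One obtains the coefficient vectors $(1,1,1,1)$, $(1,1,-1,-1)$, $(1,-1,-1,1)$, $(1,-1,1,-1)$, which match statements (i)--(iv) verbatim. I do not anticipate any genuine obstacle: the whole argument is one line of orthogonality, and the only care required is to match the four sign patterns to the stated right-hand sides and to note that the identification $g(s)=g^{\chi_1}(s)$ is legitimate because every $q$ appearing on the left of each identity is forced to be odd by the congruence condition, so contributions from any even $q$ in $g(s)$ are automatically killed on both sides.
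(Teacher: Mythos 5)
Your approach is the same as the paper's: a single application of orthogonality of the four Dirichlet characters modulo~$8$. The paper carries out the computation explicitly only for the residue class $1\Mod{8}$ and says the other three are analogous, whereas you package all four into the uniform identity $\sideset{}{'}\sum_{q\equiv a\Mod{8}}q^{-s}=\tfrac14\sum_i\chi_i(a)g^{\chi_i}(s)$; this is a cosmetic reorganization, not a different method.

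One point of rigor is slightly off, though the conclusion is fine. You justify $g(s)=g^{\chi_1}(s)$ by saying that contributions from even $q$ are ``automatically killed on both sides,'' but that is not what happens: an even square-free $q$, if it occurred in $\sideset{}{'}\sum$, would contribute $q^{-s}$ to $g(s)$ while contributing nothing to any $g^{\chi_i}(s)$, so the right-hand side would pick up a spurious $\tfrac14 q^{-s}$ not present on the left. The identification $g=g^{\chi_1}$ actually requires that $2$ is excluded from the restricted product defining $g(s)$ — equivalently, that every $q$ appearing in $\sideset{}{'}\sum$ is odd. This is an (implicit) standing convention of the paper, consistent with the fact that in Theorem~\ref{thm_F(s)_formula} the prime~$2$ is filtered out separately via the factor $2^{e_{(\ldots)}}$; the paper's own computation also equates $g(s)$ with $\sideset{}{'}\sum\chi_1(q)q^{-s}$ without comment. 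So your proof works, but the sentence explaining why $g=g^{\chi_1}$ should appeal to the oddness of the $q$'s in $g(s)$ itself rather than the oddness of the $q$'s on the left side of the identity.
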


Lets denote by $\psi_r$ the quadratic Dirichlet character modulo $r$, i.e.\ $\psi_r (n) = \legendre{n}{r}$. By considering similar sums as for filtration modulo $8$ we get:

\begin{lemma}[filtration of squares modulo $r$] \label{filter_mod_r}
    With the notation as above the following holds:
    \begin{enumerate}[i)]
        \item \[ \sideset{}{'}\sum_{\legendre{q}{r}=1} \frac{1}{q^s} = \frac{1}{2}(g(s)+g^{\psi_r} (s)). \]

        \item \[ \sideset{}{'}\sum_{\legendre{q}{r}=-1} \frac{1}{q^s} = \frac{1}{2}(g(s)-g^{\psi_r} (s)). \]
    \end{enumerate}
\end{lemma}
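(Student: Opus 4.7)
The plan is to mirror the argument just given for Lemma~\ref{filter_mod8}, but with the group of Dirichlet characters modulo $8$ replaced by the two-element group $\{1,\psi_r\}$ of characters modulo $r$, corresponding to the quadratic residue / non-residue partition. The starting point is the elementary identity
\[
\frac{1\pm\psi_r(q)}{2}=\mathbf{1}_{\legendre{q}{r}=\pm 1},
\]
which is valid for every $q$ coprime to $r$, since $\psi_r$ takes only the values $\pm 1$ on such integers.

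First I would expand, term by term,
\[
\tfrac{1}{2}\bigl(g(s)+g^{\psi_r}(s)\bigr)=\sideset{}{'}\sum_q\frac{1+\psi_r(q)}{2}\cdot\frac{1}{q^s},
\]
so that, assuming $\psi_r(q)\in\{\pm 1\}$ for every $q$ in the primed sum, the weight $(1+\psi_r(q))/2$ equals $1$ on squares modulo $r$ and $0$ on non-squares, giving part i). Part ii) follows identically, starting from $\tfrac{1}{2}(g(s)-g^{\psi_r}(s))$ and the dual weight $(1-\psi_r(q))/2$.

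The only point that needs separate justification is that $\psi_r(q)\neq 0$, i.e.\ that $r\nmid q$, for every $q$ contributing to the primed sum. By construction, the primes $p$ admitted in the Euler product for $g(s)$ have factorization type $(1,1,1,1)$, $(1,1,2)$ or $(1,3)$ in $\OO_K$, each of which is unramified; hence $p\nmid\disc(f)$. The lemma is intended for use in Section~\ref{sec:3} with $r$ an odd prime divisor of $\disc(f)$ (as produced by Proposition~\ref{els_crit}), so the condition $r\nmid q$ is automatic for every $q$ appearing in the primed sum, and the ambiguous case $\psi_r(q)=0$ simply does not occur. There is no genuinely hard step here; the main thing to be careful about is stating the compatibility between the support of the primed sum and the modulus $r$ clearly, so that the filtration identities hold on the nose rather than up to a ramified remainder term.
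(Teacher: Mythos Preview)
Your argument is exactly the paper's: the paper ``proves'' Lemma~\ref{filter_mod_r} with the single sentence ``by considering similar sums as for filtration modulo $8$'', and your orthogonality computation with the two-element group $\{1,\psi_r\}$ is precisely that. One small wrinkle in your side remark: the implication ``unramified in $\OO_K$ $\Rightarrow$ $p\nmid\disc(f)$'' is not valid in general, since unramified only gives $p\nmid\disc(K)$ and an index prime dividing $[\OO_K:\Z[\alpha]]$ could in principle appear; the paper is equally loose here, and for the application one may simply strike the finitely many primes dividing $\disc(f)$ from the primed product without changing any asymptotic.
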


\begin{theorem}\label{thm_F(s)_formula}
    Let $f(x) \in \Z[x]$ be a irreducible and monic polynomial of degree $4$, $q$ a square-free integer and ${H_q \, :\, qy^2=f(x)}$ a curve. Let $F(s)$ and $g(s)$ be the generating and auxiliary Dirichlet series defined as above. Let $r_1$, $r_2$, \ldots, $r_l >2$ be all prime factors that divide $\operatorname{disc}(f)$. Furthermore, let $C_1$, $C_2$, \ldots, $C_n$ be sets of positive square-free integers satisfying all the different modular conditions modulo $r_j$ and $8$ obtained using Proposition~\ref{els_crit} (see Example~\ref{example_1} for a detailed description on how this sets look like). The following holds
    \begin{equation}\label{eq:F(s)_formula}
        F(s)= \frac{1}{2^{l+2}}\sum_{r=1}^n \sum_{\substack{1 \leq i \leq 4\\ 0 \leq k \leq l \\ 1\leq j_1<\dotso<j_k\leq l}} \varepsilon_{(r,i,j_1, \dotsc, j_k)}\cdot g^{\chi_i \psi_{r_{j_1}}\dotsm \psi_{r_{j_k}}} (s)\cdot \left(2^{e_{(r,i,j_1, \dotsc, j_k)}}r_1^{e^1_{(r,i,j_1, \dotsc, j_k)}}\dotsm r_l^{e^l_{(r,i,j_1, \dotsc, j_k)}}\right)^{-s}
    \end{equation}
    where $\varepsilon_{(r,i,j_1, \dotsc, j_k)} \in \{ \pm 1\}$ and $e_{(r,i,j_1, \dotsc, j_k)},\ e^m_{(r,i,j_1, \dotsc, j_k)} \in \{0, 1\}$ are chosen depending on the set $C_r$.
\end{theorem}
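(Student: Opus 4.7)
The plan is to decompose $F(s)$ via the disjoint partition $L = \bigsqcup_{r=1}^n (L \cap C_r)$ supplied by the ELS criterion (Proposition~\ref{els_crit}), and then express the contribution of each $C_r$ as a linear combination of twists of $g(s)$ via the filtration lemmas.

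For each $r$, the set $C_r$ prescribes a unique ``bad part'' $b_r = 2^{e_0} r_1^{e^1} \dotsm r_l^{e^l}$ with $e_0, e^1, \dotsc, e^l \in \{0, 1\}$, determined by which bad primes divide $q$. Every $q \in L \cap C_r$ factors uniquely as $q = b_r \cdot m$, where $m$ ranges over positive square-free integers coprime to $2\disc(f)$ whose prime factors all have factorization type $(1,1,1,1)$, $(1,1,2)$ or $(1,3)$ in $\OO_K$ (i.e., exactly those contributing to $g(s)$), subject to residue conditions on $m$ inherited from $C_r$. Since $\chi_i$ and $\psi_{r_j}$ are multiplicative and the quotient of $q$ (or $q_1 = q/p$ when a bad prime $p$ divides $q$) by its bad-part factor equals a fixed integer times $m$, the condition ``$q$ (or $q_1$) lies in a given residue class mod $8$'' transfers to a fixed residue class $m \equiv c_r \Mod{8}$, and each Legendre condition modulo an odd bad prime $r_j$ transfers either to $\psi_{r_j}(m) = \epsilon_{r,j} \in \{\pm 1\}$ or to no constraint.

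Now apply the filtration lemmas. By Lemma~\ref{filter_mod8}, summing $m^{-s}$ over $m$ in a fixed class modulo $8$ produces $\tfrac{1}{4}\sum_{i=1}^4 \chi_i(c_r)\, g^{\chi_i}(s)$. Iterating Lemma~\ref{filter_mod_r} over each odd bad prime $r_j$ multiplies each resulting term by $\tfrac{1}{2}(1 + \epsilon_{r,j}\psi_{r_j})$ when the Legendre symbol is constrained, and by the identity $\tfrac{1}{2}\bigl((1 + \psi_{r_j}) + (1 - \psi_{r_j})\bigr) = 1$ when it is not, so that we may formally expand a product over all $l$ odd bad primes. Expanding yields
\[
\sum_{m} m^{-s} = \frac{1}{2^{l+2}} \sum_{\substack{1 \leq i \leq 4 \\ 0 \leq k \leq l \\ 1 \leq j_1 < \dotso < j_k \leq l}} \varepsilon_{(r,i,j_1,\dotsc,j_k)}\, g^{\chi_i \psi_{r_{j_1}} \dotsm \psi_{r_{j_k}}}(s),
\]
where $\varepsilon_{(r,i,j_1,\dotsc,j_k)} = \chi_i(c_r)\prod_t \epsilon_{r,j_t} \in \{\pm 1\}$ for subsets $\{j_1,\dotsc,j_k\}$ involving only constrained primes (all other terms cancel in the expansion). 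Multiplying by $b_r^{-s}$ and summing over $r$ yields the claimed formula.

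The main obstacle is the sign and residue bookkeeping: the transfer of conditions on $q$ (or $q_1$) to conditions on $m$ requires evaluating $\chi_i$ and $\psi_{r_j}$ at the bad-part quotient, which is a product of Legendre symbols among bad primes and is what makes $c_r$ and $\epsilon_{r,j}$ depend on $C_r$ in a nontrivial way. Beyond this direct but tedious computation, the proof reduces to a mechanical application of character orthogonality as packaged in Lemmas~\ref{filter_mod8} and~\ref{filter_mod_r}.
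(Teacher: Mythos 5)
Your proposal is correct and takes essentially the same approach as the paper: decompose $L$ into the cells $C_r$ produced by the ELS criterion, factor each $q \in C_r$ as $b_r \cdot m$ with $b_r$ the fixed bad part, transfer the residue conditions to $m$, and filter them with Lemmas~\ref{filter_mod8} and~\ref{filter_mod_r} before multiplying by $b_r^{-s}$ and summing over $r$. Your writeup is actually more explicit than the paper's very brief sketch, and in particular your observation that terms involving an unconstrained $\psi_{r_j}$ cancel (yielding coefficient $0$ rather than $\pm 1$) makes precise a bookkeeping detail the theorem statement glosses over with $\varepsilon_{(r,i,j_1,\dotsc,j_k)} \in \{\pm 1\}$.
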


\begin{proof}[Proof of Theorem~\ref{thm_F(s)_formula}]
    By using Proposition~\ref{els_crit} and applying Lemma~\ref{filter_mod8} and~\ref{filter_mod_r} on the Dirichlet series $g(s)$ multiple times we can filter out  all the different conditions that $q$ has to satisfy modulo $r_j$ and modulo $8$. Multiplying the new Dirichlet series that we got by filtration with the factor $\left(2^{e_{(r,i,j_1, \dotsc, j_k)}}r_1^{e^1_{(r,i,j_1, \dotsc, j_k)}}\dotsm r_l^{e^l_{(r,i,j_1, \dotsc, j_k)}}\right)^{-s}$ for a suitable choice of $e_{(r,i,j_1, \dotsc, j_k)}$, $e^1_{(r,i,j_1, \dotsc, j_k)}$, \ldots, $e^l_{(r,i,j_1, \dotsc, j_k)} \in \{0,1\}$ depending on the conditions for $q$ modulo $r_j$ and modulo $8$. Finally, by adding them all up we get the desired formula for $F(s)$.
\end{proof}

\section{Some results about frobenian multiplicative functions}\label{sec:frobenian_functions}
We will start by recalling some useful facts about frobenian multiplicative functions following~\cite{loughran2022frobenian}. 

\begin{definition}\label{def_2.1.}
    Let $\rho : \operatorname{Val}(\Q) \to \C$ be a function. We say that $\rho$ is \textbf{frobenian} if there exist
    \begin{enumerate}[(a)]
        \item A finite Galois extension $K/\Q$, with Galois group $\Gamma$,
        \item finite set of primes $S$ containing all the primes ramifying in $K$,
        \item A class function $\varphi : \Gamma \to \C$,
    \end{enumerate}
    such that for all primes $p \notin S$ we have \[ \rho(p)=\varphi(\operatorname{Frob}_p), \] where $\operatorname{Frob}_p \in \Gamma$ is the Frobenius element of $p$. We define the \textbf{mean} of $\rho$ to be \[ m(\rho) = \frac{1}{\abs{\Gamma}}\sum_{\gamma \in \Gamma} \varphi(\gamma). \]
\end{definition}

\begin{lemma}\label{lemma_2.3.}
    Let $\rho_1$ and $\rho_2$ be frobenian functions. Then $\rho_1 \cdot \rho_2$ is also frobenian.
\end{lemma}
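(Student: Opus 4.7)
The plan is to build a single Galois extension inside which both $\rho_1$ and $\rho_2$ are simultaneously frobenian, and then multiply their two class functions after pulling them back to a common Galois group. Suppose $\rho_i$ is frobenian with data $(K_i/\Q,\, S_i,\, \varphi_i)$ for $i=1,2$, so that $K_i/\Q$ is a finite Galois extension with group $\Gamma_i$, $S_i$ is a finite set of primes containing all primes ramifying in $K_i$, and $\varphi_i:\Gamma_i\to\C$ is a class function.

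First I would form the compositum $K = K_1 K_2$ inside a fixed algebraic closure of $\Q$; this is again finite Galois over $\Q$. Write $\Gamma = \Gal(K/\Q)$ and let $\pi_i : \Gamma \to \Gamma_i$ denote the canonical surjective restriction maps. Set $S = S_1 \cup S_2$; by standard ramification theory a rational prime can ramify in $K$ only if it ramifies in at least one of the $K_i$, so $S$ still contains every prime ramifying in $K$. Next I would define a candidate class function $\varphi : \Gamma \to \C$ by
\[
\varphi(\gamma) \;=\; \varphi_1\bigl(\pi_1(\gamma)\bigr)\cdot\varphi_2\bigl(\pi_2(\gamma)\bigr).
\]
Since the pullback of a class function along a group homomorphism is a class function and the pointwise product of class functions is a class function, $\varphi$ meets the requirement of Definition~\ref{def_2.1.}.

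Finally I would check the Frobenius compatibility: for any $p \notin S$, the Frobenius conjugacy class $\Frob_p \in \Gamma$ is well defined, and restriction sends it to the Frobenius class in each $\Gamma_i$, so $\pi_i(\Frob_p) = \Frob_p^{K_i}$, yielding
\[
(\rho_1\cdot\rho_2)(p) \;=\; \varphi_1\bigl(\Frob_p^{K_1}\bigr)\,\varphi_2\bigl(\Frob_p^{K_2}\bigr) \;=\; \varphi(\Frob_p),
\]
which exhibits $\rho_1\cdot\rho_2$ as frobenian with data $(K/\Q,\, S,\, \varphi)$. I do not anticipate a genuine obstacle: the construction is natural and each verification is formal. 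The one mild subtlety is that Frobenius is only well defined up to conjugacy in $\Gamma$, so one cannot evaluate $\varphi_i\circ\pi_i$ at a single Frobenius lift — this is precisely why the class function hypothesis on the $\varphi_i$ is indispensable and must be preserved under pullback along the $\pi_i$.
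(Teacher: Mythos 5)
Your proof is correct and is the standard argument: pass to the compositum $K_1K_2$, pull the two class functions back along the restriction surjections $\pi_i:\Gal(K_1K_2/\Q)\to\Gal(K_i/\Q)$, and take the pointwise product, observing that $S_1\cup S_2$ still contains every prime ramifying in the compositum and that $\pi_i$ carries $\Frob_p$ to $\Frob_p^{K_i}$. The paper states Lemma~\ref{lemma_2.3.} without proof, recalling it from \cite{loughran2022frobenian}, so there is no in-text argument to compare against; yours supplies exactly the expected justification, including the correct observation that the class-function hypothesis is what makes evaluation at a conjugacy-class-valued Frobenius well defined.
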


\begin{lemma}\label{lemma_2.4.}
    Let $\rho$ be a frobenian function with $m(\rho)\neq 0$. The following holds as $x \to \infty$:
    \begin{enumerate}[(1)]
        \item 
        \[ 
            \sum_{p\leq x} \rho(p) = m(\rho)\cdot \operatorname{Li}(x) + \bigO{x\exp{(-c\sqrt{\ln{x}})}}, \text{ for some constant } c>0.
        \]

        \item 
        \[
            \sum_{p \leq x} \frac{\rho(p)}{p}=m(\rho) \ln{\ln{x}}+ C_\rho + \bigO{\frac{1}{\ln{x}}}, \text{ for some constant } C_\rho.
        \]

        \item 
        \[
           \sum_{p \leq x} \rho(p)\ln{p} = m(\rho)\cdot x +\bigO{x\exp{(-c\sqrt{\ln{x}})}}, \text{ for some constant } c>0.
        \]

        \item 
        \[
            \prod_{\substack{p \leq x \\ \abs{\rho(p)}<p}}\left(1+\frac{\rho(p)}{p}\right) \sim C'_\rho (\ln{x})^{m(\rho)},
        \]
        for some constant $C'_\rho \neq 0$, where $C'_\rho$ is real and positive when $\rho$ is real-valued.
    \end{enumerate}
\end{lemma}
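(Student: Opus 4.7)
The plan is to deduce all four asymptotics from a single analytic input: an effective form of Chebotarev's density theorem for the Galois extension $K/\Q$ attached to $\rho$. Decomposing the class function $\varphi$ as a $\C$-linear combination of indicator functions of conjugacy classes $C \subset \Gamma$, the effective prime ideal theorem (in the Lagarias--Odlyzko form, equivalently the classical zero-free region applied to the Artin $L$-functions of irreducible characters of $\Gamma$) yields, for each $C$,
$$
    \#\{\,p \leq x : p \notin S,\ \operatorname{Frob}_p \in C\,\} = \frac{|C|}{|\Gamma|}\operatorname{Li}(x) + \bigO{x\exp(-c\sqrt{\ln x})}.
$$
Summing $\varphi(C)$ times this over the classes $C$, absorbing the finitely many primes of $S$ into the error, and using that $\frac{1}{|\Gamma|}\sum_C |C|\varphi(C) = m(\rho)$, produces (1).

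Statements (2) and (3) then follow from (1) by Abel summation. Set $A(x) = \sum_{p \leq x} \rho(p)$ and $B(t) = A(t) - m(\rho)\operatorname{Li}(t)$, so that $B(t) = \bigO{t\exp(-c\sqrt{\ln t})}$. The identity
$$
    \sum_{p \leq x} \rho(p)\ln p = A(x)\ln x - \int_2^x \frac{A(t)}{t}\,dt,
$$
combined with the integration-by-parts formula $\operatorname{Li}(x)\ln x - \int_2^x \operatorname{Li}(t)/t\,dt = x + O(1)$ and the bound $\ln x \cdot \exp(-c\sqrt{\ln x}) \ll \exp(-c'\sqrt{\ln x})$ (valid for any $c' < c$ once $x$ is large, since $\ln\ln x = o(\sqrt{\ln x})$), gives (3). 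For (2), the analogous partial summation
$$
    \sum_{p \leq x} \frac{\rho(p)}{p} = \frac{A(x)}{x} + \int_2^x \frac{A(t)}{t^2}\,dt
$$
reduces, on the $m(\rho)\operatorname{Li}$-part, to Mertens' theorem $\sum_{p\leq x}1/p = \ln\ln x + M + \bigO{1/\ln x}$, while the $B$-part is handled by the substitution $u = \sqrt{\ln t}$, which shows that $\int_2^\infty B(t)/t^2\,dt$ converges absolutely and has tail $\bigO{\sqrt{\ln x}\exp(-c\sqrt{\ln x})} = \bigO{1/\ln x}$; the constant $C_\rho$ is then read off.

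For (4) I would take logarithms. Since $|\rho(p)| \leq \max_{\gamma \in \Gamma}|\varphi(\gamma)|$ is uniformly bounded, only finitely many primes violate $|\rho(p)| < p$, and for the rest $\ln(1+\rho(p)/p) = \rho(p)/p + r_p$ with $r_p = \bigO{1/p^2}$, so the series $\sum_p r_p$ converges absolutely. Plugging (2) into the resulting decomposition gives
$$
    \sum_{\substack{p \leq x \\ |\rho(p)|<p}} \ln\bigl(1+\rho(p)/p\bigr) = m(\rho)\ln\ln x + C'' + \bigO{1/\ln x}
$$
for a suitable constant $C''$. Exponentiating yields (4) with $C'_\rho = e^{C''} \neq 0$; when $\rho$ is real-valued the constant $C''$ is real, so $C'_\rho > 0$.

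The main obstacle is the effective Chebotarev estimate underlying (1): it is the only step carrying real analytic content, and its square-root-logarithmic saving depends on the classical zero-free region for the Dedekind $\zeta$-function of $K$ (a Siegel-zero hypothesis would sharpen the error but is not needed here). Everything else is routine Abel summation and bookkeeping of absolutely convergent tails.
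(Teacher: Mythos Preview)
The paper does not actually prove this lemma: Section~\ref{sec:frobenian_functions} merely \emph{recalls} Lemmas~\ref{lemma_2.3.}--\ref{lemma_2.8.} from \cite{loughran2022frobenian} without argument, so there is no in-paper proof to compare against. Your sketch is correct and is essentially the standard route one finds in the literature: effective Chebotarev (equivalently, the classical zero-free region for $\zeta_K$ or for the Artin $L$-functions of $\Gamma$) gives (1), and Abel summation then transports (1) to (3) and (2); taking logarithms and using the uniform bound $|\rho(p)|\leq \max_\gamma|\varphi(\gamma)|$ for $p\notin S$ reduces (4) to (2) plus an absolutely convergent remainder. The only point worth tightening is the passage from (1) to Mertens in your treatment of (2): rather than invoking Mertens separately, it is cleaner to carry out the partial summation directly on $m(\rho)\operatorname{Li}(t)$ and on $B(t)$, since the Mertens constant you quote is not literally the constant that appears after the Abel step --- but this is cosmetic and does not affect the validity of the argument.
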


\begin{lemma}\label{lemma_2.5.}
    Let $\rho$ be a frobenian function.
    \begin{enumerate}[(1)]
        \item Only finitely many primitive Dirichlet characters $\chi$ satisfy $m(\rho\chi)\neq 0$.
    \end{enumerate}
    Assume that $\rho$ is real valued and non-negative and let $\chi$ be a Dirichlet character.
    \begin{enumerate}
        \item[(2)] We have $\abs{m(\rho\chi)}\leq m(\rho)$.
        \item[(3)] The following are equivalent:
        \begin{enumerate}[(a)]
            \item $\abs{m(\rho\chi)}=m(\rho)$,
            \item $m(\rho\chi)=m(\rho)$,
            \item $(\rho\chi)(p)=\rho(p)$ for all but finitely many primes $p$.
        \end{enumerate}
    \end{enumerate}
\end{lemma}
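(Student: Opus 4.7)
The plan is to treat the three parts by passing to the compositum $L_\chi = K\cdot\Q(\zeta_{q_\chi})$, where $q_\chi$ is the conductor of $\chi$, so that $\rho\chi$ becomes a frobenian function on $\Gal(L_\chi/\Q)$ whose class function is $\sigma\mapsto \varphi(\sigma|_K)\,\psi_\chi(\sigma|_{\Q(\zeta_{q_\chi})})$, with $\psi_\chi$ the character of $\Gal(\Q(\zeta_{q_\chi})/\Q)$ corresponding to $\chi$ under class field theory. This common Galois setting reduces all three assertions to class-function calculations.

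For part (1), set $M = K\cap\Q(\zeta_{q_\chi})$. The fiber-product decomposition $\Gal(L_\chi/\Q) = \Gamma\times_{\Gal(M/\Q)}\Gal(\Q(\zeta_{q_\chi})/\Q)$ lets me rewrite $m(\rho\chi)$ as a sum indexed by $\delta\in\Gal(M/\Q)$, each term involving an inner character sum of $\psi_\chi$ over the fiber above $\delta$. Orthogonality forces every inner sum to vanish unless $\psi_\chi$ is trivial on $\Gal(\Q(\zeta_{q_\chi})/M)$, i.e.\ unless $\psi_\chi$ descends through $\Gal(M/\Q)$. Since $K$ has only finitely many subfields and each abelian subfield of $K$ has conductor bounded by that of $M/\Q$, Kronecker--Weber forces only finitely many primitive characters $\chi$ to satisfy $m(\rho\chi)\neq 0$.

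For part (2), Chebotarev ensures every conjugacy class in $\Gamma$ equals $\Frob_p$ for infinitely many $p$, so $\rho\geq 0$ transfers to $\varphi\geq 0$. The triangle inequality on the average then gives
\[
  |m(\rho\chi)| \leq \frac{1}{|\Gal(L_\chi/\Q)|}\sum_{\sigma}\varphi(\sigma|_K)\,\bigl|\psi_\chi(\sigma|_{\Q(\zeta_{q_\chi})})\bigr| = m(\rho),
\]
where the final equality uses $|\psi_\chi|=1$ together with the uniform fiber size $[L_\chi:K]$ of the restriction map $\sigma\mapsto\sigma|_K$ onto $\Gamma$.

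For part (3), the implication (b)$\Rightarrow$(a) is immediate from $m(\rho)\geq 0$. For (c)$\Rightarrow$(b), Chebotarev promotes the identity $\varphi(\Frob_p)\bigl(\psi_\chi(\Frob_p)-1\bigr)=0$ from a density-one set of primes to every conjugacy class of $\Gal(L_\chi/\Q)$, after which the two class functions have equal averages. For (a)$\Rightarrow$(b) and (c), I would analyze the equality case in the triangle inequality of part (2): the nonzero summands must share a common argument, so $\psi_\chi$ is a unimodular constant $c$ on the conjugation-stable set $\{\sigma:\varphi(\sigma|_K)>0\}$. Replacing $\sigma$ by $\sigma\tau$ for $\tau\in\Gal(L_\chi/K)$ shows $\psi_\chi\equiv 1$ on $\Gal(\Q(\zeta_{q_\chi})/M)$, so $\psi_\chi$ descends to a character $\Psi$ of $\Gal(M/\Q)$, and pulled back to $\Gamma$ it is a $1$-dimensional character whose restriction to the support of $\varphi$ is constantly $c$; evaluating at the identity coset identifies $c=1$, which gives (b). Feeding this back, $\chi(p)=1$ on the support of $\rho$ for all but finitely many $p$, which is precisely (c).

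The main obstacle will be the final step of (a)$\Rightarrow$(b): when $\varphi(e)=0$, the naive ``evaluate at the identity'' argument to force $c=1$ breaks down. To handle this I would exploit conjugation-invariance of $\{\varphi>0\}$ and the fact that $\{\varphi>0\}\cdot\{\varphi>0\}^{-1}\subseteq\ker\Psi$ to locate a class in the support of $\varphi$ on which $\Psi$ must evaluate trivially; this delicate combinatorial step, together with verifying that the class-function value of $\rho\chi$ matches that of $\rho$ on the full support of $\varphi$, is where I expect the bulk of the work.
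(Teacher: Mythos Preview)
The paper does not supply a proof of this lemma: Section~\ref{sec:frobenian_functions} quotes it verbatim from \cite{loughran2022frobenian} as a ``useful fact'', so there is no in-paper argument to compare against. Your approach via the compositum $L_\chi=K\cdot\Q(\zeta_{q_\chi})$ and the induced class function $\sigma\mapsto\varphi(\sigma|_K)\psi_\chi(\sigma|_{\Q(\zeta_{q_\chi})})$ is the standard one, and your treatment of part~(1), part~(2), and the implications (c)$\Rightarrow$(b)$\Rightarrow$(a) in part~(3) is correct.

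Your instinct that (a)$\Rightarrow$(b) is the real obstacle is exactly right, and in fact the obstacle is insurmountable: as stated here, the implication is \emph{false}. Take $K=\Q(i)$, $\Gamma=\{1,\sigma\}$, $\varphi(1)=0$, $\varphi(\sigma)=1$, so $\rho$ is the indicator of primes $p\equiv 3\Mod 4$; take $\chi$ to be the nontrivial character modulo~$4$. Then $\rho$ is real and non-negative, $m(\rho)=\tfrac12$, and $m(\rho\chi)=-\tfrac12$, so (a) holds while (b) and (c) fail. Thus the equality-case analysis you sketch cannot be completed, because there is nothing preventing the unimodular constant $c$ from being $-1$ (or any root of unity) when the support of $\varphi$ sits inside a single nontrivial coset of $\ker\Psi$; the set $\{\varphi>0\}\cdot\{\varphi>0\}^{-1}$ then lies in $\ker\Psi$ without forcing $\{\varphi>0\}\subseteq\ker\Psi$.

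What is presumably intended, and what the paper actually needs later (in Section~\ref{sec:5} and in the proof of Corollary~\ref{L(x)_formula_shortv}), is the equivalence with (a) replaced by $\operatorname{Re}(m(\rho\chi))=m(\rho)$. With that version, your argument goes through: equality in the triangle inequality together with the constraint that the sum be real and equal to $m(\rho)$ forces $c=1$ directly, and the delicate combinatorial step you anticipated disappears. Note that this corrected form still suffices for the paper's applications, since Lemma~\ref{lemma_2.8.} produces a main term of size $x(\ln x)^{\operatorname{Re}(m(\rho\chi))-1}$, and it is $\operatorname{Re}(m(\rho\chi))<m(\rho)$ that makes the twisted contributions lower order.
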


\begin{definition}\label{def_2.6.}
    Let $\varepsilon \in \langle0,1\rangle$ and let $\rho : \N \to \C$ be a multiplicative function. We say that $\rho$ is an \textbf{$\varepsilon$-weak frobenian multiplicative function} if 
    \begin{enumerate}[(1)]
        \item The restriction of $\rho$ to the set of primes is a frobenian function, in the sense of Definition~\ref{def_2.1.},
        \item $\abs{\rho(n)}=\bigO{n^\varepsilon}$ for all $n \in \N$.
        \item There exists a constant $H\in \N$ such that $\abs{\rho(p^k)} \leq H^k$ for all primes $p$ and all $k\geq 1$.
    \end{enumerate}
    We define the \textit{mean} of $\rho$ to be the mean of the corresponding frobenian function.
\end{definition}

\begin{definition}\label{def_2.7.}
    Let $\rho : \N \to \C$ be a multiplicative function. We say that $\rho$ is a \textbf{frobenian multiplicative function} if it is $\varepsilon$-weak frobenian for all $\varepsilon \in \langle0,1\rangle$.
\end{definition}

\begin{remark}\label{rmk_prod_of_frobenians}
    Note that if $\rho_1$ and $\rho_2$ are ($\varepsilon$-weak) frobenian multiplicative functions, then by Lemma~\ref{lemma_2.3.} and Definition~\ref{def_2.6.} $\rho_1 \rho_2$ is also ($\varepsilon$-weak) frobenian multiplicative functions. In particular $\rho\chi$ is a frobenian multiplicative function for a Dirichlet character $\chi$ and a frobenian multiplicative function $\rho$.
    \end{remark}

\begin{lemma}\label{lemma_2.8.}
    Let $\varepsilon \in \langle0,1\rangle$ and $\rho$ be an $\varepsilon$-weak frobenian multiplicative function. Then,
    \[
        \sum_{n \leq x} \rho(n)=c_\rho x(\ln{x})^{m(\rho)-1}+ \bigO{x(\ln{x})^{m(\rho)-2}},
    \]
    where 
    \[
        c_\rho = \frac{1}{\Gamma(m(\rho))}\prod_p \left(\sum_{k=0}^\infty \frac{\rho(p^k)}{p^k}\right)\left(1-\frac{1}{p}\right)^{m(\rho)}. 
    \]
    If $\rho$ is real-valued and non-negative with $m(\rho) \neq 0$, then $c_\rho$ is real and positive.
\end{lemma}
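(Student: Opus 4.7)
The plan is to apply the Selberg-Delange method to the Dirichlet series $F_\rho(s)=\sum_{n\geq 1}\rho(n)n^{-s}$. The $\varepsilon$-weak hypothesis, and in particular the uniform bound $|\rho(p^k)|\leq H^k$, guarantees that $F_\rho$ converges absolutely and admits an Euler product $F_\rho(s)=\prod_p\bigl(\sum_{k\geq 0}\rho(p^k)p^{-ks}\bigr)$ on $\operatorname{Re}(s)>1$. The central analytic input is a factorization
\[
F_\rho(s)=\zeta(s)^{m(\rho)}\cdot G_\rho(s)
\]
in which $G_\rho$ is holomorphic, bounded and nonvanishing on a de la Vall\'ee Poussin region of the shape $\operatorname{Re}(s)\geq 1-c/\ln(|\operatorname{Im}(s)|+2)$.

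To obtain this factorization, I would decompose the class function $\varphi:\Gamma\to\C$ attached to $\rho|_{\mathrm{primes}}$ into irreducible characters, $\varphi=\sum_\chi a_\chi\chi$, with $a_1=m(\rho)$ by orthogonality. For $\operatorname{Re}(s)>1$ the frobenian hypothesis and the Euler product give
\[
\ln F_\rho(s)=\sum_p\rho(p)p^{-s}+\bigO{1}=\sum_\chi a_\chi\sum_{p\notin S}\chi(\Frob_p)p^{-s}+\bigO{1},
\]
and each inner sum equals $\ln L(s,\chi)$ up to a function holomorphic on $\operatorname{Re}(s)\geq 1$, where $L(s,\chi)$ is the Artin $L$-function of $K/\Q$ attached to $\chi$. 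Since $L(s,1)=\zeta(s)$ while every nontrivial $L(s,\chi)$ is holomorphic and nonvanishing on $\operatorname{Re}(s)\geq 1$ (class field theory for abelian $\chi$, extended to arbitrary $\chi$ by Brauer induction), exponentiation yields the claimed factorization, with $G_\rho$ assembled from the $L(s,\chi)^{a_\chi}$ for $\chi\neq 1$ and the finitely many bad local factors at primes of $S$.

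Given the factorization, the asymptotic follows from Perron's formula by deforming the contour from $\operatorname{Re}(s)=1+1/\ln x$ around a Hankel key-hole enclosing the branch point $s=1$ of $\zeta(s)^{m(\rho)}$. The Hankel integral representation of $1/\Gamma$ converts the key-hole contribution into the main term $G_\rho(1)\cdot x(\ln x)^{m(\rho)-1}/\Gamma(m(\rho))$, while the portion of the contour lying inside the zero-free region is bounded by $\bigO{x(\ln x)^{m(\rho)-2}}$. Substituting $s=1$ into the Euler product expression for $G_\rho$ produces the claimed
\[
G_\rho(1)=\prod_p\left(\sum_{k=0}^\infty\frac{\rho(p^k)}{p^k}\right)\left(1-\frac{1}{p}\right)^{m(\rho)}.
\]
Absolute convergence of this product is ensured by the estimate $\sum_{p\leq x}\rho(p)/p=m(\rho)\ln\ln x+C_\rho+\bigO{1/\ln x}$ from Lemma~\ref{lemma_2.4.}, which together with Mertens' theorem makes each local factor of the form $1+\bigO{1/p^2}$. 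When $\rho$ is real-valued non-negative with $m(\rho)>0$, positivity of $c_\rho$ is immediate from the term-by-term positivity of the Euler factors and of $1/\Gamma(m(\rho))$.

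The principal obstacle is the analytic continuation and nonvanishing of the non-abelian Artin $L$-functions intervening in the factorization; the abelian case is classical but the general case needs Brauer induction together with results on factorization of Artin $L$-functions. The cleanest way around this, taken in \cite{loughran2022frobenian}, is to bypass the contour-shifting argument altogether and quote the abstract Selberg-Delange theorem (e.g.\ Th\'eor\`eme II.5.2 of \cite{tenenbaum2015introduction}), whose hypotheses reduce to the behaviour of $\sum_p\rho(p)p^{-s}$ near $s=1$, which the frobenian structure together with Chebotarev's density theorem supplies directly.
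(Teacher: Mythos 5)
The paper does not prove this lemma: Section~\ref{sec:frobenian_functions} explicitly imports Definitions~\ref{def_2.1.}--\ref{def_2.7.} and Lemmas~\ref{lemma_2.3.}--\ref{lemma_2.8.} from \cite{loughran2022frobenian}, so there is no in-paper argument to compare against. Your sketch is nonetheless the right kind of argument and matches, in broad outline, the Selberg--Delange route taken in that source and its antecedents: decompose the class function into irreducible characters to produce the factorization $F_\rho(s)=\zeta(s)^{m(\rho)}G_\rho(s)$, appeal to Brauer induction for the nonabelian Artin $L$-functions to secure holomorphy and nonvanishing on $\operatorname{Re}(s)\geq 1$ and a standard zero-free region, and deform a Perron contour around a Hankel keyhole. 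Your closing remark that \cite{loughran2022frobenian} instead invokes an off-the-shelf Selberg--Delange theorem from \cite{tenenbaum2015introduction} is accurate.

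One justification in your write-up is wrong as stated. The Euler product defining $c_\rho$ is \emph{not} absolutely convergent, and its local factors are \emph{not} of size $1+\bigO{1/p^2}$; expanding,
\[
\left(\sum_{k\geq 0}\frac{\rho(p^k)}{p^k}\right)\left(1-\frac{1}{p}\right)^{m(\rho)} = 1 + \frac{\rho(p)-m(\rho)}{p} + \bigO{\frac{1}{p^2}},
\]
and $\rho(p)-m(\rho)$ is generically bounded away from zero. What Lemma~\ref{lemma_2.4.} together with Mertens' theorem actually buys is that the partial sums $\sum_{p\leq x}(\rho(p)-m(\rho))/p$ converge as $x\to\infty$, so the product converges \emph{conditionally}; identifying its value with $G_\rho(1)$, the boundary value of the analytic continuation, then needs an Abelian limit argument along $s\to 1^+$ rather than term-by-term domination. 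This does not break the method, but it is not the triviality your phrasing suggests. A complete write-up would also need to quarantine the ramified primes in $S$, the finitely many small primes where an Euler factor of $F_\rho$ could vanish, and the degenerate case in which $m(\rho)$ is a nonpositive integer, where $1/\Gamma(m(\rho))$ vanishes and the stated main term collapses.
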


\section[Calculating the asymptotic behaviour of $L(x)$]{Calculating the asymptotic behaviour of L(x)}\label{sec:5}

We can finally calculate the asymptotic behaviour of $L(x)$ using Theorem~\ref{thm_F(s)_formula} and Lemma~\ref{lemma_2.8.}.

Let $G$ be the Galois group of the polynomial $f(x)$. Denote by $A$, $B$ and $C$ the conjugacy classes in the Galois group $G$ that correspond to the factorization types $(1,1,1,1)$, $(1,1,2)$ and $(1,3)$, i.e.\ the set $A$ is the set of all $\sigma \in G$ that have the cyclic type $(1,1,1,1)$ (that is $A=\{\operatorname{id}\}$), $B$ is the set of all $\sigma \in G$ that have the cyclic type $(1,1,2)$ and $C$ is the set of all $\sigma \in G$ that have the cyclic type $(1,3)$. 

We define a class function $\varphi :G \to \C$ that is the indicator function for the classes $A$, $B$ and $C$, $\varphi = 1_{A}+ 1_{B}+ 1_{C}$. Now we can define the frobenian function $\rho(p) = \varphi(\operatorname{Frob}_p)$. By setting $\rho(p^k)=0$ for all primes $p$ and all $k\geq 2$ and extending $\rho$ to $\N$ by multiplicativity we get that $\rho : \N \to \C$ is in fact a multiplicative frobenian function that corresponds to the Dirichlet series $g(s)$. Furthermore, by Remark~\ref{rmk_prod_of_frobenians} we also have that $\rho\chi_i \psi_{r_{j_1}} \dotsm \psi_{r_{j_k}}$ is a multiplicative frobenian function that corresponds to the Dirichlet series $g^{\chi_i \psi_{r_{j_1}} \dotsm \psi_{r_{j_k}}}(s)$. 

Let $F(s) = \sum_{n=1}^\infty \frac{a_n}{n^s}$ be a Dirichlet series as in Theorem~\ref{thm_F(s)_formula}. By definitions of $F(s)$ and $L(x)$ we have that $L(x) = \sum_{n \leq x} a_n$.

Using the equation~\eqref{eq:F(s)_formula} we get
\begin{equation}\label{eq:L(x)_unfinished}
    L(x) = \frac{1}{2^{l+2}}\sum_{r=1}^n\sum_{\substack{1 \leq i \leq 4\\ 0 \leq k \leq l \\ 1\leq j_1<\dotso<j_k\leq l}} \varepsilon_{(r,i,j_1, \dotsc, j_k)}\sum_{n\leq x_{(r,i,j_1, \dotsc, j_k)}} (\rho\chi_i \psi_{r_{j_1}} \dotsm \psi_{r_{j_k}})(n)
\end{equation}
where $x_{(r,i,j_1, \dotsc, j_k)} = x\cdot 2^{-e_{(r,i,j_1, \dotsc, j_k)}}r_1^{-e^1_{(r,i,j_1, \dotsc, j_k)}}\dotsm r_l^{-e^l_{(r,i,j_1, \dotsc, j_k)}}$.

From Lemma~\ref{lemma_2.8.} it follows that
\begin{align*}
    &\sum_{n\leq x_{(r,i,j_1, \dotsc, j_k)}} (\rho\chi_i \psi_{r_{j_1}} \dotsm \psi_{r_{j_k}})(n) = \\
    &= \frac{c_{\rho\chi_i \psi_{r_{j_1}} \dotsm \psi_{r_{j_k}}}}{2^{e_{(r,i,j_1, \dotsc, j_k)}}r_1^{e^1_{(r,i,j_1, \dotsc, j_k)}}\dotsm r_l^{e^l_{(r,i,j_1, \dotsc, j_k)}}}\cdot \frac{x}{(\ln{x})^{1-m(\rho\chi_i \psi_{r_{j_1}} \dotsm \psi_{r_{j_k}})}} +\bigO{\frac{x}{(\ln{x})^{2-m(\rho\chi_i \psi_{r_{j_1}} \dotsm \psi_{r_{j_k}})}}}\\
\end{align*}

In the last equation we used that 
\[
    \frac{x}{\left(\ln{x}-\ln{\left({2^{e_{(r,i,j_1, \dotsc, j_k)}}r_1^{e^1_{(r,i,j_1, \dotsc, j_k)}}\dotsm r_l^{e^l_{(r,i,j_1, \dotsc, j_k)}}}\right)}\right)^{2-m(\rho\chi_i \psi_{r_{j_1}} \dotsm \psi_{r_{j_k}})}} = \bigO{\frac{x}{(\ln{x})^{2-m(\rho\chi_i \psi_{r_{j_1}} \dotsm \psi_{r_{j_k}})}}}
\]
and
\begin{align*}
        &\frac{x}{\left(\ln{x}-\ln{\left({2^{e_{(r,i,j_1, \dotsc, j_k)}}r_1^{e^1_{(r,i,j_1, \dotsc, j_k)}}\dotsm r_l^{e^l_{(r,i,j_1, \dotsc, j_k)}}}\right)}\right)^{1-m(\rho\chi_i \psi_{r_{j_1}} \dotsm \psi_{r_{j_k}})}} - \frac{x}{(\ln{x})^{1-m(\rho\chi_i \psi_{r_{j_1}} \dotsm \psi_{r_{j_k}})}} = \\
        &=\bigO{\frac{x}{(\ln{x})^{2-m(\rho\chi_i \psi_{r_{j_1}} \dotsm \psi_{r_{j_k}})}}}  
\end{align*}
since $\lim_{x\to \infty}\frac{(\ln{x}-c)^{m-1}-(\ln{x})^{m-1}}{(\ln{x})^{m-2}}=c(1-m)$, where $c>0$ and $m<1$ are constants.

By Lemma~\ref{lemma_2.5.} we have that $\abs{m(\rho\chi_i \psi_{r_{j_1}} \dotsm \psi_{r_{j_k}})}\leq m(\rho)$ which means that the term $\frac{x}{(\ln{x})^{1-m(\rho)}}$ is going to be the main term in $L(x)$.

Now by plugging all those equations back into \eqref{eq:L(x)_unfinished} we get the following result.

\begin{theorem}\label{L(x)_formula}
    With the notation as above, we have that
    \[
        L(x) = c_f \frac{x}{(\ln{x})^{1-m(\rho)}}+\bigO{\frac{x}{(\ln{x})^\alpha}}
    \]
    where 
    \[
        \alpha = \min\{2-m(\rho),1-m(\rho\chi_i \psi_{r_{j_1}} \dotsm \psi_{r_{j_k}}) : m(\rho\chi_i \psi_{r_{j_1}} \dotsm \psi_{r_{j_k}})\neq m(\rho)\},
    \]
    \[
        c_f= \frac{1}{2^{l+2}}\sum_{r=1}^n\sum_{\substack{1 \leq i\leq 4\\0 \leq k \leq l\\1\leq j_1<\dotso < j_k \leq l\\m(\rho\chi_i \psi_{r_{j_1}} \dotsm \psi_{r_{j_k}})=m(\rho)}}\varepsilon_{(r,i,j_1, \dotsc, j_k)} \cdot \frac{c_{\rho\chi_i \psi_{r_{j_1}} \dotsm \psi_{r_{j_k}}}}{2^{e_{(r,i,j_1, \dotsc, j_k)}}r_1^{e^1_{(r,i,j_1, \dotsc, j_k)}}\dotsm r_l^{e^l_{(r,i,j_1, \dotsc, j_k)}}},
    \]
    \[
        c_{\rho\chi_i \psi_{r_{j_1}} \dotsm \psi_{r_{j_k}}} = \frac{1}{\Gamma(m(\rho\chi_i \psi_{r_{j_1}} \dotsm \psi_{r_{j_k}}))}\prod_p \left(\sum_{k=0}^\infty \frac{\rho\chi_i \psi_{r_{j_1}} \dotsm \psi_{r_{j_k}}(p^k)}{p^k}\right)\left(1-\frac{1}{p}\right)^{m(\rho\chi_i \psi_{r_{j_1}} \dotsm \psi_{r_{j_k}})}
    \]
    and
    \[
    m(\rho) = 
    \begin{cases}
        1/4, & \text{if } G \cong \left(\Z/2\Z\right) \times \left(\Z/2\Z\right) \text{ or } G \cong \Z/4\Z \\
        3/8, & \text{if } G \cong D_4 \\
        3/4, & \text{if } G \cong A_4 \\
        5/8, & \text{if } G \cong S_4\text{.}\\
    \end{cases}
    \]
\end{theorem}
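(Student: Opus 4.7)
The plan is to start from the formula for $F(s)$ in Theorem~\ref{thm_F(s)_formula} and extract Dirichlet coefficients to obtain equation~\eqref{eq:L(x)_unfinished}. Each inner sum $\sum_{n\leq x_{(r,i,j_1,\dotsc,j_k)}} (\rho\chi_i \psi_{r_{j_1}}\dotsm\psi_{r_{j_k}})(n)$ is a summatory function of a multiplicative frobenian function (by Lemma~\ref{lemma_2.3.} and Remark~\ref{rmk_prod_of_frobenians}), so Lemma~\ref{lemma_2.8.} applies and produces a main term of the form $c_{\rho\chi\cdots}\cdot x(\ln x)^{m-1}$ together with an error $O(x(\ln x)^{m-2})$, where $m$ abbreviates $m(\rho\chi_i\psi_{r_{j_1}}\dotsm\psi_{r_{j_k}})$. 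Replacing the cut-off $x_{(r,i,\cdots)}$ by $x$ in the logarithm shifts the main term only by an error of the same order, via the elementary mean value estimate for $(\ln x - c)^{m-1} - (\ln x)^{m-1}$ already recorded in the paragraphs immediately preceding the theorem.

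Next I sort the contributions according to the size of their means. By Lemma~\ref{lemma_2.5.}(2) we have $|m(\rho\chi_i\psi_{r_{j_1}}\dotsm\psi_{r_{j_k}})| \leq m(\rho)$ for every twist, so the dominant exponent in $L(x)$ is $1-m(\rho)$ and the leading constant $c_f$ is precisely the sum, over those twists for which equality $m(\rho\chi_i\psi_{r_{j_1}}\dotsm\psi_{r_{j_k}}) = m(\rho)$ holds, of the numbers $c_{\rho\chi\cdots}$ weighted by the signs $\varepsilon_{(r,i,j_1,\dotsc,j_k)}$ and the appropriate scaling factors. All other twists feed into the error term, and the overall exponent $\alpha$ is the smaller of $2-m(\rho)$, the secondary error for the dominant twists, and $\min\{1-m(\rho\chi\cdots)\}$ over the strictly sub-dominant twists. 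This is exactly the value of $\alpha$ in the statement, and it satisfies $1-m(\rho) < \alpha \leq 2-m(\rho)$.

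It then remains to compute $m(\rho)$ for each possible transitive Galois group $G \leq S_4$. Since $\rho(p) = \varphi(\operatorname{Frob}_p)$ with $\varphi$ the indicator of cycle types in $\{(1,1,1,1),(1,1,2),(1,3)\}$, equivalently of permutations fixing at least one root of $f$, we have $m(\rho) = \#\{\sigma \in G : \sigma \text{ has a fixed point}\}/|G|$. A direct enumeration of cycle types in each transitive subgroup of $S_4$ gives: for the Klein four group and $\Z/4\Z$ only the identity has a fixed point, yielding $1/4$; for $D_4$ the identity together with the two reflections through pairs of opposite vertices yield $3/8$; for $A_4$ the identity together with the eight $3$-cycles give $9/12 = 3/4$; and for $S_4$ the identity, the six transpositions and the eight $3$-cycles give $15/24 = 5/8$. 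These match the claimed values of $m(\rho)$.

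The main obstacle, and the only genuinely nontrivial remaining point, is to verify that the extracted constant $c_f$ is nonzero (indeed positive), which is needed for Corollary~\ref{L(x)_formula_shortv}. By Lemma~\ref{lemma_2.5.}(3) the equality $m(\rho\chi) = m(\rho)$ forces $(\rho\chi)(p) = \rho(p)$ at all but finitely many primes, so every dominant twist differs from $\rho$ only in the Euler factors at the finitely many bad primes in $\{2, r_1, \dotsc, r_l\}$. Combined with the positivity of the individual Euler products guaranteed by Lemma~\ref{lemma_2.8.}, a careful tracking of how the signs $\varepsilon_{(r,i,j_1,\dotsc,j_k)}$ arising from the filtration Lemmas~\ref{filter_mod8} and~\ref{filter_mod_r} align with the modular conditions read off from Proposition~\ref{els_crit} should show that the dominant contributions reinforce rather than cancel, yielding $c_f > 0$.
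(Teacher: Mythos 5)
Your proof of Theorem~\ref{L(x)_formula} follows the paper's own route essentially step for step: extract coefficients from Theorem~\ref{thm_F(s)_formula} to obtain equation~\eqref{eq:L(x)_unfinished}, apply Lemma~\ref{lemma_2.8.} to each twisted summatory function, absorb the shift from $x_{(r,i,\dotsc)}$ to $x$ into the secondary error via the elementary estimate on $(\ln x - c)^{m-1} - (\ln x)^{m-1}$, sort contributions by size of mean using Lemma~\ref{lemma_2.5.}(2), and compute $m(\rho)$ by counting fixed-point-free versus fixed-point-having elements in each transitive subgroup of $S_4$; your enumerations ($1/4$, $3/8$, $3/4$, $5/8$) match the paper's. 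One remark on the closing paragraph: positivity of $c_f$ is a claim of Corollary~\ref{L(x)_formula_shortv}, not of Theorem~\ref{L(x)_formula}, and the route you sketch there (tracking how the $\varepsilon$-signs from Lemmas~\ref{filter_mod8} and~\ref{filter_mod_r} align to prevent cancellation) is \emph{not} how the paper argues; the paper instead bounds $c_f$ from below by restricting to $q \equiv 1 \pmod{8\cdot 3\cdot 5\cdot\disc(f)}$ with the right factorization type at the remaining primes and running the Chebotarev/frobenian machinery again, which sidesteps the sign bookkeeping entirely. Your version is left as a ``should show,'' and indeed making the sign-alignment argument rigorous would be considerably more delicate than the paper's lower-bound trick.
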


Finally, we are now able to prove Corollary~\ref{L(x)_formula_shortv}. 

\begin{proof}[Proof of Corollary~\ref{L(x)_formula_shortv}]
    Corollary~\ref{L(x)_formula_shortv} follows almost directly form Theorem~\ref{L(x)_formula} (by setting $m=1-m(\rho)$). It remains to prove that $m<\alpha \leq 1+m$ and $c_f>0$.

    The inequality $m<\alpha \leq 1+m$ follows directly from the definition of $\alpha$ in Theorem~\ref{L(x)_formula}.

    To show that $c_f>0$ it is sufficient to prove that there exists a positive lower bound for the constant $c_f$.

    Let $n=8\cdot3\cdot5\cdot\disc(f)$. In order to show this lower bound we will compute the asymptotics for the number of positive square-free $q$'s such that $q \equiv 1 \Mod{n}$ and that all prime factors $p$ of $q$ have factorization type $(1,1,1,1)$, $(1,1,2)$ or $(1,3)$. Note that for any such $q$ the curve $H_q$ is ELS. Namely, the starting curve $H$ is ELS since the polynomial $f(x)$ is monic and because of that $H$ is birationally isomorphic to an elliptic curve (since $H$ has a rational point at infinity). Now, since $q$ is a square locally at all "bad" primes, and satisfies the local solubility conditions everywhere else it follows that $H_q$ is indeed ELS.

    For a positive real number $x$, denote with $N(x)$ the number of such $q$'s in the range $0<q\leq x$.

    Let $a$ be a frobenian multiplicative function defined by
    \[
        a(p)=
        \begin{cases}
            0, & \text{if } p \mid n, \\
            0, & \text{if } p \text{ doesn't have factorization type } (1,1,1,1),\, (1,1,2),\, (1,3), \\
            1, & \text{otherwise } \\
        \end{cases}
    \]
    and $a(p^k)=0$ for all $k\geq 2$ and all primes $p$.
    Since $a$ is non-negative, real valued, and has non-zero mean $m(a)=1-m$, Lemma~\ref{lemma_2.8.} gives
    \[
        \sum_{q\leq x}a(q)=c\frac{x}{(\ln{x})^m}+\bigO{\frac{x}{(\ln{x})^{m+1}}},
    \]
    for some real constant $c>0$.

    Next, we "filter out" the $q$'s that satisfy the congruence condition $q \equiv 1 \Mod{n}$. Let $X(n)$ denote the group of all Dirichlet characters modulo $n$ and let $\varphi$ denote the Euler's totient function. Then we can write the indicator function for numbers congruent to $1$ modulo $n$ as
    \[
        1_{\{q \equiv 1 \Mod{n}\}}=\frac{1}{\varphi(n)}\sum_{\chi \in X(n)}\chi.
    \]
    Thus we have that
    \begin{equation}\label{eq:N(x)}
        N(x)=\frac{1}{\varphi(n)}\sum_{\chi \in X(n)}\sum_{q\leq x}a(q)\chi(q).       
    \end{equation}

    For a Dirichlet character $\chi \in X(n)$, we consider now the frobenian multiplicative function $q \mapsto a(q)\chi(q)$. We have the following two cases:
    \begin{enumerate}
        \item[$1^\circ$] $a(p)\chi(p)=a(p)$ holds for all primes $p$ coprime to $n$. \newline 
        By multiplicativity we can conclude that $a(q)\chi(q)=a(q)$ for all positive integers $q$. Hence
        \[
            \sum_{q\leq x} a(q)\chi(q)=\sum_{q\leq x} a(q)= c\frac{x}{(\ln{x})^m}+\bigO{\frac{x}{(\ln{x})^{m+1}}}
        \]
        where the constant $c>0$ is as above.

        \item[$2^\circ$]  $a(p)\chi(p) \neq a(p)$ for some prime $p$ coprime to $n$. \newline
        Let $L=K(\zeta_n)$ where $K$ is the splitting filed of the polynomial $f(x)$ and $\zeta_n$ a primitive root of unity modulo $n$. Observe that both $a(p)$ and $\chi(p)$ are determined by the Frobenius value at $p$ in $\Gal(L/\Q)$. By the Chebotarev density theorem, there are infinitely many primes $l$ coprime to $n$ such that $\Frob_l \in \Gal(L/\Q)$ is equal to $\Frob_p$ up to conjugacy. For such primes $l$ we also have that $a(l)\chi(l) \neq a(l)$. Thus there are infinitely many primes $l$ such that $a(l)\chi(l) \neq a(l)$. By Lemma~\ref{lemma_2.5.} this forces $\abs{m(a\chi)}<m(a)$ and thus by Lemma~\ref{lemma_2.8.} we have
        \[
            \sum_{q\leq x} a(q)\chi(q)= \smallO{\frac{x}{(\ln{x})^m}}.
        \]
    \end{enumerate}
    By plugging all this back in \eqref{eq:N(x)} we get
    \[
        N(x)=rc\frac{x}{(\ln{x})^m}+\bigO{\frac{x}{(\ln{x})^\beta}}
    \]
    for some constant $\beta >m$, where $r$ is the number of Dirichlet characters $\chi \in X(n)$ such that $a(p)\chi(p)=a(p)$ for all primes $p$ coprime to $n$. Since the trivial character is one such character, we have that $r\geq 1$. Thus, we have that $rc>0$ is the required lower bound for $c_f$.
    
\end{proof}

\section[$F(s)$ via Dedekind zeta functions]{F(s) via Dedekind zeta functions}\label{sec:6}

We want to express $F(s)$ in terms of Dedekind zeta functions of certain number fields. 

We first start by proving Proposition~\ref{thm_g(s)_via_dedekind} in which we express $g(s)$ in terms of Dedekind zeta functions of certain number fields.

\begin{proof}[Proof of Proposition~\ref{thm_g(s)_via_dedekind}]
    \begin{enumerate}[i)]
        
        \item Since $[L:\Q]=\abs{G}=4$ we have that $L=K$ is a Galois extension of $\Q$. By analysing all the different factorization types of primes $p$ in the extension $\Q \subset K$ we can rewrite the Euler product of $\zeta_K(s)$ as
        \[
            \zeta_K(s)=\prod_{p \text{ of type } (1,1,1,1) }\left(1-\frac{1}{p^s}\right)^{-4} \cdot \prod_{p \text{ of type } (2,2) }\left(1-\frac{1}{p^{2s}}\right)^{-2} \cdot \prod_{p \text{ of type } (4) }\left(1-\frac{1}{p^{4s}}\right)^{-1} \cdot \prod_{p \text{ ramified }}(\dotso).
        \]
        From here we finally get that 
        \[
            g(s)^4\cdot F_K(s)=\frac{\zeta_K(s)}{\zeta_K(2s)}
        \]
        where 
        \[
            F_K(s)=\prod_{p \text{ of type } (2,2) }\left(1+\frac{1}{p^{2s}}\right)^{2} \cdot \prod_{p \text{ of type } (4) }\left(1+\frac{1}{p^{4s}}\right) \cdot \prod_{p \text{ ramified }}(\dotso)
        \] is a \textit{nice} function.

        \item Similarly as in i) we start by rewriting the Euler products of $\zeta_K(s)/\zeta_K(2s)$ and $\zeta_L(s)/\zeta_L(2s)$ (notice that a prime $p$  can't have factorization type $(1,3)$ in $L$ since there is no conjugacy class in $D_4$ that correspond to the type $(1,3)$).
        \[
            \frac{\zeta_K(s)}{\zeta_K(2s)}=\prod_{p \text{ of type } (1,1,1,1)}\left(1+\frac{1}{p^s}\right)^{4}\cdot \prod_{p \text{ of type } (1,1,2)}\left[\left(1+\frac{1}{p^{2s}}\right)\left(1+\frac{1}{p^s}\right)^2\right]\cdot (\text{nice function})
        \]
        \[
            \frac{\zeta_L(s)}{\zeta_L(2s)}=\prod_{p \text{ of type } (1,1,1,1)}\left(1+\frac{1}{p^s}\right)^{8}\cdot \prod_{p \text{ of type } (1,1,2)}\left(1+\frac{1}{p^{2s}}\right)^4 \cdot (\text{nice function}).
        \]
        From here we finally get
        \[
            g(s)^8 \sim \left( \frac{\zeta_K (s)}{\zeta_K (2s)} \right)^4 \cdot \left( \frac{\zeta_L (s)}{\zeta_L (2s)} \right)^{-1}.
        \]

        \item Similarly as in ii) we rewrite the Euler products of $\zeta_K(s)/\zeta_K(2s)$ and $\zeta_L(s)/\zeta_L(2s)$
        \[
            \frac{\zeta_K(s)}{\zeta_K(2s)}=\prod_{p \text{ of type } (1,1,1,1)}\left(1+\frac{1}{p^s}\right)^{4}\cdot \prod_{p \text{ of type } (1,3)}\left[\left(1+\frac{1}{p^{3s}}\right)\left(1+\frac{1}{p^s}\right)\right]\cdot (\text{nice function})
        \]
        \[
            \frac{\zeta_L(s)}{\zeta_L(2s)}=\prod_{p \text{ of type } (1,1,1,1)}\left(1+\frac{1}{p^s}\right)^{12}\cdot \prod_{p \text{ of type } (1,3)}\left(1+\frac{1}{p^{3s}}\right)^4 \cdot (\text{nice function})
        \]
        from where we get that
        \[
            g(s)^4 \sim \left( \frac{\zeta_K (s)}{\zeta_K (2s)} \right)^4 \cdot \left( \frac{\zeta_L (s)}{\zeta_L (2s)} \right)^{-1}.
        \]

        \item We start by choosing $\Z/3\Z \cong \langle(123)\rangle \leq S_4$ a Sylow 3-subgroup of $S_4$ and denote by $L^{\Z/3\Z}$ the fixed field of the subgroup $\langle(123)\rangle$. By analysing all the different factorization types of primes $p$ in the extensions $\Q \subset K$, $\Q \subset L$ and $\Q \subset L^{\Z/3\Z}$ we rewrite the Euler product of $\zeta_K(s)/\zeta_K(2s)$, $\zeta_K(s)/\zeta_K(2s)$ and $\zeta_{L^{\Z/3\Z}}(s)/\zeta_{L^{\Z/3\Z}}(2s)$
        \begin{align}\label{eq_euler_prod_zK}
            \frac{\zeta_K(s)}{\zeta_K(2s)} = 
            &\prod_{p \text{ of type } (1,1,1,1)}\left(1+\frac{1}{p^s}\right)^{4}\cdot \prod_{p \text{ of type } (1,1,2)}\left[\left(1+\frac{1}{p^{2s}}\right)\left(1+\frac{1}{p^s}\right)^2\right] \\ \nonumber
            & \cdot \prod_{p \text{ of type } (1,3)}\left[\left(1+\frac{1}{p^{3s}}\right)\left(1+\frac{1}{p^s}\right)\right]\cdot (\text{nice function}),
        \end{align}
        \begin{align}\label{eq_euler_prod_zL}
            \frac{\zeta_L(s)}{\zeta_L(2s)}=
            &\prod_{p \text{ of type } (1,1,1,1)}\left(1+\frac{1}{p^s}\right)^{24}\cdot \prod_{p \text{ of type } (1,1,2)}\left(1+\frac{1}{p^{2s}}\right)^{12}\\ \nonumber
            &\cdot \prod_{p \text{ of type } (1,3)}\left(1+\frac{1}{p^{3s}}\right)^{8}\cdot (\text{nice function}),
        \end{align}
        \begin{align}\label{eq_euler_prod_zL3}
            \frac{\zeta_{L^{\Z/3\Z}}(s)}{\zeta_{L^{\Z/3\Z}}(2s)}=
            &\prod_{p \text{ of type } (1,1,1,1)}\left(1+\frac{1}{p^s}\right)^{8}\cdot \prod_{p \text{ of type } (1,1,2)}\left(1+\frac{1}{p^{2s}}\right)^{4} \\ \nonumber
            &\cdot \prod_{p \text{ of type } (1,3)}\left[\left(1+\frac{1}{p^{s}}\right)\left(1+\frac{1}{p^{3s}}\right)\right]^{2}\cdot (\text{nice function}).
        \end{align}
    From \eqref{eq_euler_prod_zL} we now get that 
    \[
        \prod_{p \text{ of type } (1,1,1,1)}\left(1+\frac{1}{p^s}\right) \sim \left(\frac{\zeta_L(s)}{\zeta_L(2s)}\right)^{\frac{1}{24}}.
    \] 
    Plugging this into \eqref{eq_euler_prod_zL3} gives us
    \[
        \prod_{p \text{ of type } (1,3)}\left(1+\frac{1}{p^s}\right) \sim \left(\frac{\zeta_{L^{\Z/3\Z}}(s)}{\zeta_{L^{\Z/3\Z}}(2s)}\right)^{\frac{1}{2}}\cdot \prod_{p \text{ of type } (1,1,1,1)}\left(1+\frac{1}{p^s}\right)^{-4} \sim \left(\frac{\zeta_{L^{\Z/3\Z}}(s)}{\zeta_{L^{\Z/3\Z}}(2s)}\right)^{\frac{1}{2}} \cdot \left(\frac{\zeta_L(s)}{\zeta_L(2s)}\right)^{-\frac{1}{3}}.
    \]  
    Finally by plugging all this into \eqref{eq_euler_prod_zK} yields to
    \begin{align*}
        \prod_{p \text{ of type } (1,1,2)}\left(1+\frac{1}{p^s}\right) 
        &\sim \left(\frac{\zeta_K(s)}{\zeta_K(2s)}\right)^{\frac{1}{2}} \cdot \prod_{p \text{ of type } (1,1,1,1)}\left(1+\frac{1}{p^s}\right)^{-2} \cdot \prod_{p \text{ of type } (1,3)}\left(1+\frac{1}{p^s}\right)^{-\frac{1}{2}} \\*
        &\sim \left(\frac{\zeta_K(s)}{\zeta_K(2s)}\right)^{\frac{1}{2}} \cdot \left(\frac{\zeta_L(s)}{\zeta_L(2s)}\right)^{\frac{1}{12}} \cdot \left(\frac{\zeta_{L^{\Z/3\Z}}(s)}{\zeta_{L^{\Z/3\Z}}(2s)}\right)^{-\frac{1}{4}}.
    \end{align*}

    Combining the previous equations together gives us
    \[
            g(s)^{24} \sim \left( \frac{\zeta_K (s)}{\zeta_K (2s)} \right)^{12} \cdot \left( \frac{\zeta_L (s)}{\zeta_L (2s)} \right)^{-5} \cdot \left( \frac{\zeta_{L^{\Z/3\Z}} (s)}{\zeta_{L^{\Z/3\Z}} (2s)} \right)^6.
    \]
    
    \end{enumerate}
\end{proof}

In order to express $F(s)$ via Dedekind zeta functions of certain number fields we will need the following Lemma about Dirichlet series and twists with Dirichlet characters.

\begin{lemma}
    For Dirichlet series $F(s)$ and $G(s)$ and Dirichlet characters $\chi$ and $\psi$ the following holds :
    \begin{enumerate}[i)]
        \item $(F+G)^\chi=F^\chi + G^\chi$. 
        \item $\left(F^\chi\right)^\psi=F^{\chi \cdot \psi}$.
        \item $(F\cdot G)^\chi=F^\chi \cdot G^\chi$.
        \item $\left(\frac{1}{F}\right)^\chi = \frac{1}{F^\chi}$ for $F \neq 0$.
    \end{enumerate}
\end{lemma}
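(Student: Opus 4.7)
The plan is to verify each of the four identities directly from the definition $D^\chi(s) = \sum_n a_n\chi(n)/n^s$ of the twist. The only fact about Dirichlet characters that will actually be needed is that they are \emph{completely} multiplicative, i.e.\ $\chi(mn) = \chi(m)\chi(n)$ for \emph{all} $m,n \in \N$, together with $\chi(1) = 1$. Throughout I would write $F(s) = \sum_n a_n/n^s$ and $G(s) = \sum_n b_n/n^s$.

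Parts (i) and (ii) are essentially formal. For (i), twisting is linear in the coefficient sequence, so adding two series and then twisting gives the same output as twisting first and then adding. For (ii), iterating two twists multiplies the coefficient $a_n$ successively by $\chi(n)$ and then $\psi(n)$, and $\chi(n)\psi(n) = (\chi\cdot\psi)(n)$ by the pointwise definition of the product of two Dirichlet characters, so both sides produce the same Dirichlet series.

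Part (iii) is the main step and the only one that requires genuine content. I would expand both sides using Dirichlet convolution: the coefficient of $n^{-s}$ in $F\cdot G$ is $c_n = \sum_{d\mid n} a_d b_{n/d}$, so after twisting one obtains $\chi(n)\sum_{d\mid n} a_d b_{n/d}$. Convolving the already-twisted series $F^\chi$ and $G^\chi$ directly gives the coefficient $\sum_{d\mid n} \chi(d)\chi(n/d)\, a_d b_{n/d}$. Equality of the two expressions is exactly the statement $\chi(d)\chi(n/d) = \chi(n)$ for every divisor $d \mid n$, which is precisely \emph{complete} multiplicativity of $\chi$. I expect this to be the main (and only) obstacle in the sense that it is the one step where an honest property of $\chi$ is invoked; I would emphasize in the write-up that the identity would fail for a $\chi$ that is only multiplicative in the coprime sense.

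Finally, (iv) follows formally from (iii). Writing $\mathbf{1}$ for the identity Dirichlet series (coefficients $1,0,0,\ldots$), the condition $\chi(1) = 1$ gives $\mathbf{1}^\chi = \mathbf{1}$. Applying (iii) to the identity $F \cdot (1/F) = \mathbf{1}$ then yields $F^\chi \cdot (1/F)^\chi = \mathbf{1}$, whence $(1/F)^\chi = 1/F^\chi$. The only technicality is that both reciprocals must exist as formal Dirichlet series, but this is automatic once $a_1 \neq 0$, since then the leading coefficient $a_1 \chi(1) = a_1$ of $F^\chi$ is also nonzero.
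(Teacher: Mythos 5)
Your proposal is correct and follows essentially the same route as the paper: all four parts are verified coefficient-by-coefficient from the definition of the twist, with complete multiplicativity of $\chi$ doing the work in part (iii). The one genuine (if small) improvement is part (iv): the paper re-runs the convolution computation to check that $(1/F)^\chi\cdot F^\chi$ is the identity series, whereas you observe that this is just (iii) applied to $F\cdot(1/F)=\mathbf{1}$ together with $\mathbf{1}^\chi=\mathbf{1}$, which avoids repeating the calculation. You also make two points explicit that the paper leaves implicit: that \emph{complete} multiplicativity (not mere multiplicativity on coprime pairs) is what makes (iii) and (iv) work, and that the correct hypothesis for (iv) is $a_1\neq 0$ rather than the paper's looser ``$F\neq 0$.'' Both are worthwhile clarifications; otherwise the two proofs are the same.
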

\begin{proof}
    Let $F(s)=\sum_{n=1}^{\infty} \frac{a_n}{n^s}$ and $G(s)=\sum_{n=1}^{\infty} \frac{b_n}{n^s}$. We have the following :
    \begin{enumerate}[i)]
        \item
        \[
            (F+G)^\chi (s) = \sum_{n=1}^\infty \frac{(a_n+b_n)\chi(n)}{n^s} = \sum_{n=1}^\infty \frac{a_n \chi(n)}{n^s} + \sum_{n=1}^\infty \frac{b_n \chi(n)}{n^s} = F^\chi (s) + G^\chi (s).
        \]
        \item 
        \[
            \left(F^\chi\right)^\psi (s) = \sum_{n=1}^\infty \frac{a_n \chi(n) \psi(n)}{n^s} = \sum_{n=1}^\infty \frac{a_n (\chi \cdot \psi)(n)}{n^s} = F^{\chi \cdot \psi} (s).
        \]
        \item Let $(F\cdot G)(s)=\sum_{n=1}^{\infty} \frac{c_n}{n^s}$ where $c_n = \sum_{k \mid n} a_k b_{\frac{n}{k}}$. Then $(F \cdot G)^\chi (s) = \sum_{n=1}^{\infty} \frac{c_n \chi(n)}{n^s}$ and $(F^\chi \cdot G^\chi)(s)= \sum_{n=1}^{\infty} \frac{d_n}{n^s}$ where  $d_n = \sum_{k \mid n} a_k \chi(k) \cdot b_{\frac{n}{k}}\chi\left(\frac{n}{k}\right)= c_n \chi(n)$. Hence, $(F\cdot G)^\chi=F^\chi \cdot G^\chi$.
        \item Let $\left(\frac{1}{F}\right)(s)=\sum_{n=1}^{\infty} \frac{a'_n}{n^s}$ where $a'_i$'s satisfy the following conditions: $a'_1 a_1 = 1$ and $\sum_{k \mid n}a'_k a_{\frac{n}{k}}=0$ for $n \geq 2$. Now we have that $\left(\frac{1}{F}\right)^\chi(s) \cdot F\chi(s)= \sum_{n=1}^{\infty} \frac{c_n}{n^s}$ where $c_1 = a'_1\chi(1) \cdot a_1 \chi(1)=1$ and $c_n = \sum_{k \mid n} a'_k \chi(k) \cdot a_{\frac{n}{k}}\chi\left(\frac{n}{k}\right)= \chi(n)\sum_{k \mid n}a'_ka_{\frac{n}{k}}=0$ for $n \geq 2$. Hence, $\left(\frac{1}{F}\right)^\chi = \frac{1}{F^\chi}$.
    \end{enumerate}
\end{proof}

Finally, by combining Theorem~\ref{thm_F(s)_formula} and Proposition~\ref{thm_g(s)_via_dedekind} using the above Lemma we are able to express the generating Dirichlet series $F(s)$ in terms of Dedekind zeta functions of certain number fields and its twists by certain Dirichlet characters.

\section{Future work}

The paper poses a interesting question for a future project:
\begin{itemize}
    \item 
    Can we use formulas obtained in Proposition~\ref{thm_g(s)_via_dedekind} together with methods described in \cite{tenenbaum2015introduction} (Chapter~$2.5$), in \cite{serre1974divisibilite} (Chapter~$2$) or some similar methods in order to obtain better estimates for the error term in Corollary~\ref{L(x)_formula_shortv}? What about if we assume the Generalized Riemann hypothesis?    

\end{itemize}

\section*{Acknowledgements}

The author was supported by the project “Implementation of cutting-edge research and its application as part of the Scientific Center of Excellence for Quantum and Complex Systems, and Representations of Lie Algebras“, PK.1.1.02, European Union, European Regional Development Fund and by the Croatian Science Foundation under the project no. IP-2022-10-5008.

The author would like to thank Adam Morgan for pointing him to the Landau-Selberg-Delange method, for providing him with informative and interesting articles on this method and for improving the result of Corollary~\ref{L(x)_formula_shortv} by showing that the constant $c_f$ is in fact positive (where in the previous version we only had $c_f \geq 0$)  and also his mentor Matija Kazalicki on many constructive discussions about the problem.

\bibliographystyle{alpha}
\bibliography{bibliography}

\end{document}